\newtheorem{theorem}{Theorem}[section]
\newtheorem{corollary}[theorem]{Corollary}
\newtheorem{lemma}[theorem]{Lemma}
\newtheorem{proposition}[theorem]{Proposition}
\numberwithin{equation}{section}
\theoremstyle{remark}
\newtheorem{remark}[theorem]{Remark}
\newcommand{\N}{\mathbb{N}}
\newcommand{\Z}{\mathbb{Z}}
\newcommand{\Q}{\mathbb{Q}}
\newcommand{\cM}{\mathcal{M}}
\newcommand{\hf}{\frac12}
\newcommand{\Hy}{\mathscr{H}}
\newcommand{\HB}{\mathscr H_{B_d}}
\newcommand{\e}{\varepsilon}
\newcommand{\Id}{\text{Id}}
\newcommand{\M}{\mathbb M}
\newcommand{\V}{\mathbb V}
\newcommand{\io}{\imath}
\newcommand{\bu}{\bullet}
\newcommand{\iba}{\psi_{\io}}
\newcommand{\nb}{m}
\newcommand{\I}{\mathbb I}
\newcommand{\Ibw}{\I_{r|m|r}}
\newcommand{\Ianti}{\I_{r|m|r}^{d,-}}
\newcommand{\Iwl}{\mathbb I_{\circ}^{-}}
\newcommand{\Iwr}{\mathbb I_{\circ}^{+}}
\newcommand{\Ib}{\mathbb I_{\bu}}
\newcommand{\fW}{{^fW}}
\newcommand{\Wf}{W_f}
\begin{document}
\title{Quasi-parabolic Kazhdan-Lusztig bases and reflection subgroups}
\author{Zachary Carlini and Yaolong Shen}
\address{Department of Mathematics, University of Virginia, Charlottesville, VA 22904}
\email{zic4zfr@virginia.edu (Carlini), ys8pfr@virginia.edu (Shen)} 


\maketitle
\begin{abstract}
     Recently, Wang and the second author constructed a bar involution and canonical basis for a quasi-permutation module of the  Hecke algebra associated to a type B Weyl group $W$, where the basis is parameterized by  left cosets of a quasi-parabolic reflection subgroup in $W$. In this paper we provide an alternative approach to these constructions, and then generalize to Coxeter groups which contain a product of type B Weyl groups as a parabolic subgroup.
\end{abstract}
 \setcounter{tocdepth}{1}


\section{Introduction}

Let $W_d$ be the Weyl group of type $B_d$ and $\HB$ denote the associated Hecke algebra generated by $H_0, H_1, \ldots, H_{d-1}$ in 2 parameters $q,p$, which contains the Hecke algebra $\Hy_{S_d}$ as a subalgebra. (In the introduction, we shall assume that $p$ is an integer power of $q$; a reader can take $p=q$.)

Then we consider reflection subgroups of $W_d$ of the form 
\begin{align}
  \label{eq:Wf1}
W_f =W_{{m_1}}\times \ldots \times W_{{m_k}}\times S_{m_{k+1}}\times \ldots\times S_{m_l}. \end{align} 
where $m_1+\cdots+m_l=d$, $k\le l$ and all $m_i$ are positive integers. $W_f$ is called a quasi-parabolic subgroup of $W_d$, cf. \cite{SW21}. Clearly $W_f$ is a parabolic subgroup of $W_d$ if and only if $k \le 1$.  

Moreover, for $k\le 1$,  there exists a right $\HB$-module $\M_f$, the induced module from the trivial module of the subalgebra $\Hy(W_f)$, parameterized by the set ${}^fW$ of right minimal length representatives of $W_f$. The celebrated Kazhdan-Lusztig (KL) basis on the regular representation of $\HB$ (see \cite{KL79} for $p=q$, and \cite{Lus03} for $p\in q^\Z$) admits a parabolic generalization in terms of $\M_f$ (see Deodhar  \cite{De87}); that is, $\M_f$ admits a bar involution and a distinguished bar-invariant basis, known as the parabolic KL basis. 

In \cite{SW21}, the authors extended the above classic works of Kazhdan, Lusztig and Deodhar to construct canonical bases (also called quasi-parabolic KL bases)  of type B associated to arbitrary reflection subgroups $W_f\subset W_d$ of the form \eqref{eq:Wf1}. 

Since $W_f$ may not be parabolic in general, the Hecke algebra $\Hy(W_f)$ is not a subalgebra of $\HB$ in any natural manner, and hence 
$\M_f$ is not an induced module from an $\Hy(W_f)$-module in general. However, the explicit formulas in \cite[Proposition 3.8]{SW21} for the actions of the generators $H_i$ on the standard basis of $\M_f$ parameterized by the minimal length coset representatives for $W_f \backslash W_d$ look identical to those for $W_f$ parabolic. 

The goal of this paper is to explore whether the above constructions can be generalized to other quasi-parabolic subgroups of Coxeter groups, and to establish some results in this direction. 

A complete classification of of all reflection subgroups of $W$ was given in \cite{DPR13}. When $W=W(G_2)$ is of type $G_2$, we checked by case-by-case direct computation that the explicit formulas of the actions of the generators of $\Hy$ on the standard basis of $\M_f$ as in \cite[Proposition 3.8]{SW21} indeed define module structures on $\M_f$ for all the reflection subgroups $W_f\subset W(G_2)$; see \S \ref{sec:G2}.  However, this property fails when we try to compute for various reflection subgroups of $W$ when $W$ is of type $F_4$; two of such examples are given in \S \ref{sec:F4fail} and \S \ref{F4:B3A1}.

In \cite[Remark 3.13]{SW21}, Wang and the second author proposed a $3$-step induction reformulation of the type B quasi-permutation modules for which we will give a detailed proof in \S \ref{sec:3step}. Motivated by the $3$-step induction and the computation of special cases, we consider a chain of reflection groups $W_f\subset W_{d}\stackrel{\text{par.}}{\subset} W.$
Since $W_d$ is a parabolic subgroup of $W$, we have naturally an induced module
$
\mathcal M=\M_f\otimes_{\HB}\Hy
$ 
parameterized by the minimal length coset representatives for $W_f\backslash W$. We also further generalize the construction of the bar map and the (quasi-parabolic KL) canonical basis on $\M_f$ to the level of $\mathcal M$.

This paper is organized as follows. In \S \ref{sec:minimallength} we study ${}^fW_d$ (resp. ${}^dW,\ {}^fW$), the set of minimal length right coset representatives of $W_f$ in $W_d$ (resp. $W_d$ in $W$, $W_f$ in $W$) and give a product formula in Theorem \ref{Theorem:minimal}. In \S \ref{sec:Quasi} we provide an alternative formulation of the quasi-parabolic modules $\M_f$ and then generalize these constructions to any Coxeter group containing a product of type B Weyl groups as a parabolic subgroup. We also construct a natural bar map on the induced module $\mathcal M$ and establish the canonical bases in Theorem \ref{theorem:CB}. In \S\ref{sec:4} we give explicit examples of reflection subgroups of $W(G_2)$ and $W(F_4)$ with corresponding quasi-permutation modules.

\vspace{2mm}

{\bf Acknowledgement.} 
We would like to express our great gratitude to Weiqiang Wang for his insightful advice and extremely helpful discussions. ZC's undergraduate research is supported by Wang's NSF grant (DMS-2001351). YS is supported by Graduate Research Assistantship from Wang's NSF grant and a semester fellowship from University of Virginia.


\section{Minimal length coset representatives}
\label{sec:minimallength}

Let $(W,S)$ be a Coxeter system (cf. \cite[\S 5]{H90}) consisting of a Coxeter group $W$ and a set of generators $S\subset W$. Let $W_f$ be a reflection subgroup of $W$. We denote the length function on $W$ by $l$ and the set of reflections in $W$ by $R$. Define
\begin{equation}
    S_f=\{r\in W_f\cap R\mid l(tr) > l(r),\ \forall t\in W_f\cap R, t \not= r\}
\end{equation}
Dyer \cite{Dy90} showed that \(S_f\) is a set of Coxeter generators for \(W_f\). Let $l_f$ denote the length function on $W_f$ with respect to $S_f$, then we have 
\begin{lemma}\cite{Dy90}
\label{lemma:llf}
For any \(r\in W_f\cap R, w \in W_f\),  \(l(rw) > l(w)\) if and only if \(l_f(rw) > l_f(w)\).
\end{lemma}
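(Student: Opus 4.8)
The plan is to pass to the root system of $(W,S)$ furnished by the geometric (Tits) representation and to exploit the standard dictionary between lengths and signs of roots, which is valid for an arbitrary, possibly infinite, Coxeter system. Write $\Phi=\Phi^+\sqcup\Phi^-$ for this root system, so that each reflection $t\in R$ equals $s_\beta$ for a unique $\beta\in\Phi^+$, and recall the fundamental fact that for $\beta\in\Phi^+$ and any $w\in W$,
$$ l(s_\beta w)>l(w)\iff w^{-1}\beta\in\Phi^+. $$
This is proved for simple reflections by the exchange condition and extends to arbitrary reflections by the usual dihedral reduction.

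First I would attach to $W_f$ its root subsystem $\Phi_f=\{\beta\in\Phi:s_\beta\in W_f\cap R\}$ and set $\Phi_f^+=\Phi_f\cap\Phi^+$. Since $W_f$ is generated by the reflections it contains, and for $\gamma,\delta\in\Phi_f$ one has $s_{s_\gamma\delta}=s_\gamma s_\delta s_\gamma\in W_f$ and hence $s_\gamma\delta\in\Phi_f$, the whole group $W_f$ stabilizes $\Phi_f$; in particular $w^{-1}\beta\in\Phi_f$ whenever $w\in W_f$ and $\beta\in\Phi_f$. The key structural input, due to Dyer, is that $\Phi_f^+$ is a positive system for $\Phi_f$ and that the canonical generators $S_f$ defined above are exactly the reflections in the simple roots $\Delta_f$ of $\Phi_f^+$. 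Granting this, the same length--sign dictionary holds internally to $W_f$: for $\beta\in\Phi_f^+$ and $w\in W_f$,
$$ l_f(s_\beta w)>l_f(w)\iff w^{-1}\beta\in\Phi_f^+. $$

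With both dictionaries in hand the lemma is immediate. Fix $r=s_\beta\in W_f\cap R$ with $\beta\in\Phi_f^+$ and $w\in W_f$. Because $w\in W_f$ stabilizes $\Phi_f$, the root $w^{-1}\beta$ lies in $\Phi_f$, whence $w^{-1}\beta\in\Phi^+\iff w^{-1}\beta\in\Phi_f\cap\Phi^+=\Phi_f^+$. Chaining the two equivalences through this observation yields
$$ l(rw)>l(w)\iff w^{-1}\beta\in\Phi^+\iff w^{-1}\beta\in\Phi_f^+\iff l_f(rw)>l_f(w), $$
which is the assertion.

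I expect the only real obstacle to be the structural statement in the middle paragraph --- that the inherited positive system $\Phi_f^+$ has $\Delta_f$ as its simple roots and that these reproduce Dyer's generating set $S_f$ --- since it is precisely this point that makes the interior length function $l_f$ compatible with the ambient positivity. Everything else is a formal consequence of the length--sign dictionary, which I would treat as standard. Alternatively, one could run the argument combinatorially by showing that the left inversion sets agree, namely $\{t\in W_f\cap R:l(tw)<l(w)\}=\{t\in W_f\cap R:l_f(tw)<l_f(w)\}$ for $w\in W_f$, but this reformulation rests on the same reflection-subgroup facts and offers no genuine shortcut.
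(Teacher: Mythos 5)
The paper gives no internal proof of this lemma --- it is quoted directly from Dyer \cite{Dy90} --- and your argument is the standard derivation of it from Dyer's structure theory, so the two accounts rest on exactly the same foundation. Your chain of reductions (the ambient length--sign dictionary, the $W_f$-stability of $\Phi_f$, and Dyer's theorem that $\Phi_f\cap\Phi^+$ is a positive system for $\Phi_f$ whose simple roots correspond precisely to $S_f$) is correct, and you rightly flag that the last ingredient carries all the real content; since that ingredient is exactly what the paper's citation invokes, your proof is essentially the same approach, with the formal bookkeeping made explicit.
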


Then we define
\begin{equation}
\label{eq:fW}
    ^fW = \{ \sigma \in W : \text{for all } s \in S_f, l(s\sigma) > l(\sigma) \}.
\end{equation}
To study the set ${}^fW$ in general, we first provide the following lemma:
\begin{lemma}
\label{lem:length}
(i) For all \(w \in W\) and \(\sigma \in \Wf\), if \(\sigma w \in \fW\), then \(l(\sigma w) \leq l(w) - l_f(\sigma)\).

(ii) For all \(w \in \fW\) and \(\sigma \in W_f\), \(l(\sigma w) \geq l_f(\sigma) + l(w)\).
\end{lemma}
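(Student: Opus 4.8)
The plan is to deduce (i) from (ii) and then to prove (ii) by induction on $l_f(\sigma)$, the real work being a positivity statement about the roots of the reflection subgroup $\Wf$.

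\emph{Reducing (i) to (ii).} Suppose $w\in W$, $\sigma\in\Wf$ and $x:=\sigma w\in\fW$. Then $w=\sigma^{-1}x$ with $\sigma^{-1}\in\Wf$ and $x\in\fW$, so applying (ii) to $x$ and $\sigma^{-1}$ gives $l(w)=l(\sigma^{-1}x)\ge l_f(\sigma^{-1})+l(x)=l_f(\sigma)+l(\sigma w)$, where I use $l_f(\sigma^{-1})=l_f(\sigma)$. Rearranging yields $l(\sigma w)\le l(w)-l_f(\sigma)$, which is exactly (i). So everything reduces to (ii).

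\emph{Setting up the induction for (ii).} I would induct on $l_f(\sigma)$, the case $l_f(\sigma)=0$ being trivial. If $l_f(\sigma)>0$, choose $s\in S_f$ with $l_f(s\sigma)=l_f(\sigma)-1$ and put $\sigma'=s\sigma$, so that $\sigma=s\sigma'$ and $l_f(s\sigma')>l_f(\sigma')$. The inductive hypothesis gives $l(\sigma'w)\ge l_f(\sigma')+l(w)=l_f(\sigma)-1+l(w)$, so it suffices to prove the single inequality $l(s\sigma'w)>l(\sigma'w)$; since lengths are integers this forces $l(\sigma w)\ge l(\sigma'w)+1\ge l_f(\sigma)+l(w)$. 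Note that reducing instead from the right fails, because $sw$ need not lie in $\fW$ for $s\in S_f$, so the left reduction is the right move here.

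\emph{Translating the key inequality.} To analyze $l(s\sigma'w)$ I would pass to the root system of $(W,S)$ (cf. \cite{H90}), writing $\alpha_t$ for the positive root of a reflection $t\in R$ and using the standard fact that for $y\in W$ one has $l(ty)>l(y)$ if and only if $y^{-1}\alpha_t>0$. By Dyer's theory, $\{\alpha_s:s\in S_f\}$ is a simple system for the root subsystem $\Phi_f$ attached to $\Wf=\langle W_f\cap R\rangle$, with positive roots $\Phi_f^+$. Since $l_f(s\sigma')>l_f(\sigma')$, Lemma~\ref{lemma:llf} gives $l(s\sigma')>l(\sigma')$, hence $\gamma:=\sigma'^{-1}\alpha_s>0$ and, lying in $\Phi_f$, satisfies $\gamma\in\Phi_f^+$. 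Now $l(s\sigma'w)>l(\sigma'w)$ is equivalent to $(\sigma'w)^{-1}\alpha_s=w^{-1}\gamma>0$, so (ii) comes down to a claim about $w$ alone: if $w\in\fW$, then $w^{-1}\gamma>0$ for every $\gamma\in\Phi_f^+$ (equivalently, no reflection of $\Wf$ is a left inversion of $w$).

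\emph{The main obstacle.} This last claim is the crux, as it must upgrade the defining hypothesis ``$l(sw)>l(w)$ for the generators $s\in S_f$'' to the analogous inequality for \emph{all} reflections of $\Wf$; a naive length induction here runs in a circle with (ii) itself, which is why I would argue geometrically. For $w\in\fW$ and $s\in S_f$ the definition gives $l(sw)>l(w)$, hence $w^{-1}\alpha_s>0$: thus $w^{-1}$ carries every simple root of $\Phi_f$ into $\Phi^+$. Any $\gamma\in\Phi_f^+$ is a nonnegative integral combination $\gamma=\sum_{s\in S_f}c_s\alpha_s$, so $w^{-1}\gamma=\sum_{s}c_s\,(w^{-1}\alpha_s)$ is a nonnegative combination of positive roots of $\Phi$, hence a nonnegative combination of the simple roots of $\Phi$; being itself a root, it must lie in $\Phi^+$. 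This proves the claim and closes the induction. The only inputs I rely on are the sign-of-root criterion for length and the fact (Dyer) that $S_f$ is genuinely a simple system for $\Phi_f$; granting these, the positivity argument is immediate.
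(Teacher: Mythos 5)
Your overall strategy is sound and it is genuinely different from the paper's. The paper proves (i) directly, by induction on $l_f(\sigma)$ using the Strong Exchange Condition, and then deduces (ii) from (i) by the inverse trick; you run the same reduction in the opposite direction (deducing (i) from (ii)) and prove (ii) by induction, with the inductive step resting on root positivity. Both reductions are valid, your induction bookkeeping is correct, and the translation of the key inequality $l(s\sigma'w)>l(\sigma'w)$ into $w^{-1}\gamma>0$ via \cite[Proposition 5.7]{H90} is correct. Granting your key claim, your argument in fact establishes the implication (1)$\Rightarrow$(3) of Lemma \ref{lemma:positive} independently, whereas the paper derives that lemma \emph{from} the present one.

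The weak point is the statement you lean on at the end: that every $\gamma\in\Phi_f^+$ is a nonnegative combination of the canonical simple roots $\{\alpha_s\mid s\in S_f\}$. This is true, but it is not ``immediate,'' and it is not among the facts the paper imports from \cite{Dy90} (namely, that $(W_f,S_f)$ is a Coxeter system, and Lemma \ref{lemma:llf}); note that you cannot invoke Lemma \ref{lemma:positive}, precisely because the paper obtains it as a corollary of the lemma you are proving. What the definition of $S_f$ and Dyer's theorem give directly is that each $s\in S_f$ permutes $\Phi_f^+\setminus\{\alpha_s\}$ and that positivity of roots of $\Phi_f$ matches positivity in the abstract root system of $(W_f,S_f)$; the convexity statement $\Phi_f^+\subseteq\sum_{s\in S_f}\R_{\geq 0}\,\alpha_s$ is a further theorem. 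Proving it requires either (a) first showing that distinct canonical simple roots satisfy $B(\alpha_s,\alpha_u)=-\cos(\pi/m_{su})$ or $B(\alpha_s,\alpha_u)\leq -1$ (a rank-two argument), and then rerunning the induction of \cite[Theorem 5.4]{H90} inside $(W_f,S_f)$, where one must allow the $\alpha_s$ to be linearly \emph{dependent} when $W$ is infinite (e.g.\ the canonical simple roots of an $\widetilde{A}_1\times\widetilde{A}_1$ reflection subgroup of $\widetilde{C}_2$); or (b) a precise citation, e.g.\ Deodhar's 1989 note on subgroups generated by reflections (Arch.\ Math.\ 53), where this convexity is part of the root-theoretic construction of canonical generators. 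With that input supplied, your proof is complete and arguably more conceptual than the paper's Strong Exchange argument, which by contrast needs nothing beyond Lemma \ref{lemma:llf}. Two minor corrections: the coefficients $c_s$ need not be integral (nonnegative real is all you need, and all that is true in general), and your last step tacitly uses that the simple roots of $W$ itself are linearly independent --- true in the geometric representation, but worth saying, since it is exactly the property that can fail for $\{\alpha_s\mid s\in S_f\}$.
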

\begin{proof}
Clearly, (ii) follows from (i), so it remains to show (i). We shall proceed by an induction on \(l_f(\sigma)\). If \(l_f(\sigma) = 0\), then \(\sigma\) is the identity and the result is immediate. Otherwise, we can find \(s \in S_f\) such that \(l_f(s \sigma) < l_f(\sigma)\). Then \(l(s \sigma) < l(\sigma)\) by Lemma \ref{lemma:llf} and we also have \(l(s \sigma w) > l(\sigma w)\) by \eqref{eq:fW}.

Let \(s \sigma = t_1 t_2 \dots t_k\) and \(w = s_1 s_2 \dots s_z\) be reduced expressions with respect to the generating set \(S\). By the Strong Exchange Condition \cite[\S 5]{H90}, one of the following two cases holds: \begin{itemize}
    \item For some \(1 \leq i \leq l\), \[
        \sigma w = t_1 t_2 \dots t_k s_1 s_2 \dots \widehat{s_i} \dots s_z.
    \]
    In this case, \(l(\sigma w) = l(s \sigma w) - 1 \leq l(w) - l_f(\sigma)\) by the inductive hypothesis.
    \item For some \(1 \leq i \leq k\), \[
        \sigma w = t_1 t_2 \dots \widehat{t_i} \dots t_k s_1 s_2 \dots s_z.
    \]
    Then multiplying by \(w^{-1}\) on the left, we see that \(l(\sigma) < l(s \sigma)\), which is a contradiction.
\end{itemize}
This proves the lemma.
\end{proof}

If $S_f$ is a subset of $S$, then $W_f$ is parabolic and ${}^fW$ is the set of unique minimal length right coset representatives of $W_f$ in $W$ (see \cite{H90}). The following proposition extends the classical characterisation of ${}^fW$ from the parabolic case to the more general reflection subgroups.
\begin{proposition}
\label{Proposition:minimal}
For any reflection subgroup $W_f\subset W$, the set
\(\fW\) consists of unique minimal length right coset representatives of \(\Wf\) in \(W\).
\end{proposition}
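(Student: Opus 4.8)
The plan is to deduce the proposition entirely from Lemma \ref{lem:length}(ii), supplemented by an elementary parity observation, following the template of the classical parabolic case. There are two things to establish for the cosets $\Wf w$ (those constituting $\Wf\backslash W$): that each such coset contains \emph{at least} one element of $\fW$ (existence), and that it contains \emph{at most} one, which is moreover the unique shortest element of the coset (uniqueness together with minimality).

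For existence, I would fix $w \in W$ and let $u$ be an element of minimal length in the coset $\Wf w$. For each $s \in S_f \subseteq \Wf \cap R$ the element $su$ again lies in $\Wf w$, so minimality gives $l(su) \geq l(u)$. Since $s$ is a reflection of $W$, the lengths $l(su)$ and $l(u)$ have opposite parity: the sign character $w \mapsto (-1)^{l(w)}$ is a homomorphism sending every reflection to $-1$, so $l(su) \neq l(u)$ and hence $l(su) > l(u)$. As this holds for every $s \in S_f$, the defining condition \eqref{eq:fW} shows $u \in \fW$, so the coset meets $\fW$.

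For minimality and uniqueness, I would instead start from an arbitrary $w \in \fW$ and apply Lemma \ref{lem:length}(ii): for every $\sigma \in \Wf$ with $\sigma \neq e$ we obtain $l(\sigma w) \geq l_f(\sigma) + l(w) > l(w)$, because $l_f(\sigma) \geq 1$. Thus $w$ is \emph{strictly} shorter than every other element $\sigma w$ of its coset, so $w$ is the unique minimal length element of $\Wf w$. In particular, if $w, w' \in \fW$ lie in the same coset then each is the unique minimal length representative of that coset, forcing $w = w'$. Combining this with the existence step, every coset $\Wf w$ meets $\fW$ in exactly one point, namely its unique shortest element, which is precisely the assertion.

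The substantive input is Lemma \ref{lem:length}, already established above; granting it, the proposition is a short deduction and I do not anticipate a genuine obstacle. The only point requiring care is that the generators in $S_f$ are reflections of $W$ but need not be simple, so the familiar ``left multiplication by a generator changes the length by exactly one'' reasoning is unavailable; it is replaced by the parity argument above to guarantee the strict inequality $l(su) > l(u)$ at the minimal element.
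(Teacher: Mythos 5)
Your proof is correct and takes essentially the same approach as the paper: both arguments rest on Lemma \ref{lem:length}, using the inequality $l(\sigma w) \geq l_f(\sigma) + l(w)$ for $w \in \fW$ to show that $\fW$ meets each right coset in at most one point, combined with the observation that a minimal-length coset representative lies in $\fW$. Your explicit parity argument simply fills in the step the paper dismisses as ``clearly,'' and your symmetric application of part (ii) to both elements is equivalent to the paper's combined use of parts (i) and (ii), since (i) and (ii) are related by inverting $\sigma$.
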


\begin{proof}
Clearly, every minimal length right coset representative of \(\Wf\) is in \(\fW\), so it remains to show that no two distinct elements of \(\fW\) occupy the same right coset of \(\Wf\). Suppose \(w, \sigma w \in \fW\) with \(\sigma \in \Wf\). Then by Lemma \ref{lem:length} we have\begin{equation}
    \label{eq:propminimal1}
    l({\sigma w}) \leq l(w) - l_f(\sigma),
\end{equation}
but we also have\begin{equation}
    \label{eq:propminimal2}
    l({\sigma w}) \geq l(w) + l_f(\sigma).
\end{equation}
Comparing \eqref{eq:propminimal1} and \eqref{eq:propminimal2}, we must have \(l_f(\sigma) = 0\), which means \(\sigma\) is the identity.
\end{proof}

Let $\Phi$ be the root system in the geometric representation of the Coxeter system $(W,S)$. Let \(\Phi_f\) denote the roots corresponding to reflections in \(W_f\). Let $\Phi^+$ (resp. $\Phi_f^+$) denote the set of positive roots in $\Phi$ (resp. $\Phi_f$). Then we have the following lemma.
\begin{lemma}
\label{lemma:positive}
 For any \(w \in W\), the following are equivalent: \begin{enumerate}
    \item \(w \in {^f W}\),
    \item \(l(t w) > l(w)\) for any  \(t \in W_f\cap R\),
    \item \(w^{-1}(\alpha) \in \Phi^+\) for every \(\alpha \in \Phi_f^+\).
\end{enumerate}
\end{lemma}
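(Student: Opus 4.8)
The plan is to split the three-way equivalence into the pair $(1)\Leftrightarrow(2)$, which is purely group-theoretic and rests on the length estimates already established, and the pair $(2)\Leftrightarrow(3)$, which is a direct translation into the geometric representation. The implication $(2)\Rightarrow(1)$ is immediate: by Dyer's theorem $S_f\subseteq W_f\cap R$, so hypothesis (2), asking $l(tw)>l(w)$ for \emph{every} reflection $t\in W_f\cap R$, specializes to $l(sw)>l(w)$ for every $s\in S_f$, which is exactly the defining condition \eqref{eq:fW} for $w\in\fW$.

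The content is in $(1)\Rightarrow(2)$, where I must upgrade control over the generators $S_f$ to all reflections of $W_f$. This is precisely what Lemma \ref{lem:length}(ii) supplies. Given $w\in\fW$ and any reflection $t\in W_f\cap R$, I would apply Lemma \ref{lem:length}(ii) with $\sigma=t$ to obtain
\[ l(tw)\ \geq\ l_f(t)+l(w). \]
Since $t\neq e$ forces $l_f(t)\geq 1$, this yields $l(tw)\geq l(w)+1>l(w)$, which is (2). Equivalently, one can invoke Proposition \ref{Proposition:minimal}: $w$ is the \emph{unique} minimal length representative of the coset $W_fw$, and $tw$ is a distinct element of that same coset, hence strictly longer.

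For $(2)\Leftrightarrow(3)$ I would appeal to the standard description of length change under a reflection in the geometric representation (cf. \cite{H90}): for any $\alpha\in\Phi^+$ with associated reflection $s_\alpha$, one has $l(s_\alpha w)>l(w)$ if and only if $w^{-1}(\alpha)\in\Phi^+$. (This is consistent at the test cases $w=e$ and $w=s_\alpha$.) The reflections $t\in W_f\cap R$ are in bijection with the positive roots $\alpha\in\Phi_f^+=\Phi_f\cap\Phi^+$ via $t=s_\alpha$, so quantifying the inequality in (2) over all $t\in W_f\cap R$ is the same as quantifying $w^{-1}(\alpha)\in\Phi^+$ over all $\alpha\in\Phi_f^+$, which is (3).

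The only genuine obstacle is $(1)\Rightarrow(2)$, namely passing from the Coxeter generators $S_f$ to arbitrary reflections of $W_f$; but because Lemma \ref{lem:length} (via Lemma \ref{lemma:llf} and the Strong Exchange Condition) has already carried out the inductive work, the proposition reduces to assembling these inputs. In writing up $(2)\Leftrightarrow(3)$ I would take care only to apply the root–reflection dictionary with the correct left-multiplication convention and to record explicitly that $\Phi_f^+=\Phi_f\cap\Phi^+$, so that the bijection $t\mapsto\alpha$ restricts correctly to $W_f$.
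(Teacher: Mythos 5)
Your proposal is correct and takes essentially the same route as the paper: the equivalence $(1)\Leftrightarrow(2)$ from the definition \eqref{eq:fW} together with Lemma \ref{lem:length}(ii) (generators $S_f$ give one direction trivially, and the length inequality $l(tw)\ge l_f(t)+l(w)>l(w)$ gives the other), and $(2)\Leftrightarrow(3)$ from the root--reflection length dictionary of \cite[Proposition 5.7]{H90}. The paper's proof simply cites these two inputs without spelling out the details you supply.
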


\begin{proof}
The equivalence of (1) and (2) follows from  \eqref{eq:fW} and Lemma \ref{lem:length}. The equivalence of (2) and (3) follows from \cite[Proposition 5.7]{H90}. 
\end{proof}

Now to prepare the main theorem, we prove a `product' formula for the sets of minimal length coset representatives. Suppose we have a chain of reflection subgroups 
$W_g\subseteq W_f \subseteq W.$  Similar as in the form of \eqref{eq:fW}, we define ${^gW}$ (resp. ${^g(W_f)}$) to be the set of minimal length coset representative of $W_g$ in $W$ (resp. $W_f$).
\begin{theorem}
\label{Theorem:minimal}
For any chain of reflection subgroups
$W_g\subseteq W_f \stackrel{\text{par.}}{\subseteq} W$, we  have the following bijection:
$${^g (W_f)}\times{^f W}\longrightarrow{^g W},\qquad (\omega_1,\omega_2)\mapsto \omega_1\omega_2.$$ 
\end{theorem}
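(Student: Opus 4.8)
The plan is to check that the assignment $(\omega_1,\omega_2)\mapsto\omega_1\omega_2$ is well defined (its image lies in ${}^gW$), injective, and surjective; every step is driven by a single length-additivity identity. First I would record that for $u\in W_f$ and $\omega_2\in {}^fW$ one has
\[
l(u\omega_2)=l_f(u)+l(\omega_2).
\]
The inequality $\ge$ is precisely Lemma \ref{lem:length}(ii), while the reverse inequality follows from subadditivity $l(u\omega_2)\le l(u)+l(\omega_2)$ together with the identity $l_f=l|_{W_f}$; the latter is the standard fact that the intrinsic length on a parabolic subgroup agrees with the ambient one, valid because $W_f$ parabolic in $W$ forces $S_f\subseteq S$. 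I would isolate this as a preliminary lemma, since it is used several times.

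For well-definedness I want $\omega_1\omega_2\in {}^gW$ whenever $\omega_1\in {}^g(W_f)$ and $\omega_2\in {}^fW$. Since $W_g\subseteq W_f$ is itself a reflection subgroup of $W_f$ whose reflection set is $W_g\cap R$, applying Lemma \ref{lemma:positive} inside the Coxeter system $(W_f,S_f)$ shows that $\omega_1\in {}^g(W_f)$ is equivalent to $l_f(t\omega_1)>l_f(\omega_1)$ for every $t\in W_g\cap R$. Fixing such a $t$, both $\omega_1$ and $t\omega_1$ lie in $W_f$, so two applications of the additivity identity (and $l_f=l|_{W_f}$) give
\[
l(t\omega_1\omega_2)-l(\omega_1\omega_2)=l_f(t\omega_1)-l_f(\omega_1)>0.
\]
By Lemma \ref{lemma:positive} this is exactly the assertion $\omega_1\omega_2\in {}^gW$.

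Injectivity is then formal: from $\omega_1\omega_2=\omega_1'\omega_2'$ we get $\omega_2'\omega_2^{-1}=(\omega_1')^{-1}\omega_1\in W_f$, so $\omega_2,\omega_2'$ lie in the same right coset of $W_f$; as both are in ${}^fW$, Proposition \ref{Proposition:minimal} forces $\omega_2=\omega_2'$, and hence $\omega_1=\omega_1'$. For surjectivity I would take $w\in {}^gW$ and use the parabolicity of $W_f$ to factor $w=u\omega_2$ with $u\in W_f$ and $\omega_2\in {}^fW$ the minimal representative of $W_fw$; then Proposition \ref{Proposition:minimal} applied to $W_g\subseteq W_f$ lets me write $u=g\omega_1$ with $g\in W_g$ and $\omega_1\in {}^g(W_f)$. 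Now $w=g\,\omega_1\omega_2$ shows that $w$ and $\omega_1\omega_2$ represent the same right coset of $W_g$ in $W$; since $\omega_1\omega_2\in {}^gW$ by the well-definedness step and $w\in {}^gW$ by hypothesis, uniqueness of minimal representatives (Proposition \ref{Proposition:minimal}) yields $w=\omega_1\omega_2$, exhibiting $w$ in the image.

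The proof is mostly bookkeeping once the additivity identity is available; I expect the genuine crux to be handling the two length functions $l$ and $l_f$ correctly and invoking the hypotheses in the right places. The parabolicity of $W_f$ in $W$ enters essentially twice — to guarantee $l_f=l|_{W_f}$ (used both in the additivity identity and in translating between $l_f$- and $l$-inequalities) and to produce the factorization $w=u\omega_2$ landing in the correct coset — whereas $W_g$ is only required to be a reflection subgroup, which is exactly what makes Lemmas \ref{lem:length} and \ref{lemma:positive} applicable internally to $(W_f,S_f)$. Verifying the legitimacy of these internal applications, together with $l_f=l|_{W_f}$, is the one place where the parabolic hypothesis is indispensable.
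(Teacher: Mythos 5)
Your proof is correct, but the mechanism behind the key step is genuinely different from the paper's. Both arguments reduce the theorem to showing ${}^g(W_f)\cdot{}^fW \subseteq {}^gW$ plus coset bookkeeping (which you spell out as injectivity and surjectivity; the paper leaves it implicit, relying on $W=W_g\,{}^g(W_f)\,{}^fW$ and uniqueness of minimal representatives). For that inclusion the paper uses the root-theoretic criterion, Lemma \ref{lemma:positive}(3): for $\alpha\in\Phi_g^+$ one has $w_g^{-1}(\alpha)\in\Phi_f^+$ since $w_g\in{}^g(W_f)$, and then $w_f^{-1}w_g^{-1}(\alpha)\in\Phi^+$ since $w_f\in{}^fW$ --- a two-line chain of positivity statements with no length computations. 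You instead work purely with lengths: you establish the additivity identity $l(u\omega_2)=l_f(u)+l(\omega_2)$ for $u\in W_f$, $\omega_2\in{}^fW$ (combining Lemma \ref{lem:length}(ii) with subadditivity and $l_f=l|_{W_f}$), and then verify the reflection-length criterion of Lemma \ref{lemma:positive}(2) for $\omega_1\omega_2$. Both routes are legitimate, and each buys something: your argument makes transparent exactly where parabolicity of $W_f$ in $W$ enters (through $l_f=l|_{W_f}$, which fails for general reflection subgroups) and yields the length formula $l(\omega_1\omega_2)=l_f(\omega_1)+l(\omega_2)$ as a by-product; the paper's root argument is shorter and its positivity chain never explicitly invokes the parabolic hypothesis, needing only the identification of $\Phi_f^+$ with $\Phi_f\cap\Phi^+$, which parabolicity makes immediate (and which Dyer's theory supplies more generally). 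One minor remark: in your surjectivity step the factorization $w=u\omega_2$ requires only Proposition \ref{Proposition:minimal} for the reflection subgroup $W_f\subseteq W$, not parabolicity, so that appeal can be weakened.
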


\begin{proof}
Since \(W = W_g {^g W} = W_g {^g (W_f)}{^f W}\), every right coset of \(W_g\) is represented in \({^g (W_f)} \cdot {^f W}\). Therefore, it suffices to show that \({^g (W_f)} \cdot {^f W} \subseteq {^gW}\).

 Suppose \(w_g \in {^g (W_f)}\) and \(w_f \in {^f w}\). Let $\Phi_g^+$ denote the positive roots associated to $(W_g, S_g)$. Following Lemma \ref{lemma:positive}, for any \(\alpha \in \Phi_g^+\) we have \(w_g^{-1}(\alpha) \in \Phi_f^+\) and \(w_f^{-1}w_g^{-1}(\alpha) \in \Phi^+\). Therefore, \(w_g w_f \in {^g W}\).
\end{proof}

\section{Quasi-permutation modules}
\label{sec:Quasi}
In this section we first recall the construction of quasi-permutation modules in \cite{SW21} and then generalize these constructions to any Coxeter group which contains a parabolic subgroup of type B.
\subsection{Type B quasi-permutation modules}
Let $p, q$ be two indeterminates. 
We denote $q_i=q$ for $1\leq i\leq d-1$ and $q_0=p$. The Iwahori-Hecke algebra of type B, denoted by $\HB$,  is a $\Q(p,q)$-algebra generated by $H_0,H_1,\cdots,H_{d-1}$, subject to the following relations:
\begin{align*}
&(H_i-q_i)(H_i+q_i^{-1})=0,\ \ \ \ &\text{for } i\geq 0; \\
&H_iH_{i+1}H_i=H_{i+1}H_iH_{i+1},\ \ \ \ &\text{for } i\ge 1;\\
&H_iH_j=H_jH_i,\ \ \ \ &\text{for }|i-j|>1;\\
&H_0H_1H_0H_1=H_1H_0H_1H_0. 
\end{align*}
The subalgebra generated by $H_i$, for $1\le i \le d-1$, can be identified with Hecke algebra $\Hy_{S_d}$ associated to the symmetric group $S_d$. If $\sigma \in W_d$ has a reduced expression $\sigma =s_{i_1} \cdots s_{i_k}$, we denote $H_\sigma =H_{i_1} \cdots H_{i_k}$. It is well known that $\{H_\sigma \mid \sigma \in W_d \}$ form a basis for $\HB$, and  $\{H_\sigma \mid \sigma \in S_d \}$ form a basis for $\Hy_{S_d}$. 

We recall from \cite[\S 2.3]{SW21} the construction of the quasi-permutation $\HB$-module $\M_f$. For a real number $x\in \mathbb R$ and $m \in \N$, we denote $[x, x+m] =\{x, x+1, \ldots, x+m \}$. 
For $a \in \Z_{\ge 1}$, we denote by 
\[
\I_a = \left [\frac{1-a}2, \frac{a-1}2 \right].
\]
For $r,m\in \N$ (not both zero), we introduce the following notation to indicate a fixed set partition:
\begin{equation}
  \label{eq:Ibw}
\Ibw : =\I_{2r+m}, \qquad
\Ibw =\Iwl \cup \Ib \cup \Iwr
\end{equation}
where the subsets
\begin{align}
 \label{eq:III}
\Iwr = \left [\frac{\nb+1}{2}, r+\frac{\nb-1}{2} \right].
\qquad
\Ib = \left [\frac{1-\nb}{2}, \frac{\nb-1}{2} \right], 
\qquad
\Iwl = - \Iwr,
\end{align}
have cardinalities $r, m, r$, respectively.

We view $f  \in \Ibw^d$ as a map $f: \{1, \ldots, d\} \rightarrow \Ibw$, and identify 
$f=(f(1), \ldots, f(d))$, with $f(i) \in \Ibw$. 
We define a right action of the Weyl group $W_d$ on $\Ibw^d$ such that,
for $f\in \Ibw^d$ and $0\leq j\leq d-1$,
\begin{equation}
  \label{eq:WB}
f^{s_j} =f\cdot s_j = 
\begin{cases} 
 (\cdots,f(j+1),f(j),\cdots),&\text{ if } j>0; \\
 (-f(1),f(2),\cdots,f(d)), &\text{ if } j=0,\ f(1)\in \Iwl\cup \Iwr; \\
 (f(1),f(2),\cdots,f(d)), &\text{ if } j=0,\ f(1)\in \Ib.
\end{cases} 
\end{equation} 
We sometimes write 
$$
f^\sigma = f\cdot \sigma 
=(f(\sigma(1)),\cdots,f(\sigma(d))),$$
where it is understood that 
\[
f(\sigma(i)) =
 \begin{cases}
 f(\sigma(i)), &\text{ if } \sigma(i)>0; \\
 f(-\sigma(i)), &\text{ if } \sigma(i)=0, f(-\sigma(i))\in \Ib; \\
 -f(-\sigma(i)), &\text{ if } \sigma(i)=0,f(-\sigma(i))\in \Iwl\cup \Iwr.
\end{cases}
\]

Consider the $\Q(p,q)$-vector space 
\begin{align}
\label{eq:V}
\V=\bigoplus_{a\in \Ibw}\Q(p,q)v_a. 
\end{align}
Given $f=(f(1), \ldots, f(d)) \in \Ibw^d$, we denote
\[
M_f = v_{f(1)} \otimes v_{f(2)} \otimes \ldots \otimes v_{f(d)}.
\]

\begin{lemma}\label{lem:HB} {\rm (\cite[Lemma 2.1]{SW21})}
There is a right action of the Hecke algebra $\Hy_{B_d}$ on $\V^{\otimes d}$ as follows:
$$
M_f\cdot H_i=\left\{
\begin{aligned}
&M_{f\cdot s_i}+(q-q^{-1})M_{f},\ \ & \text{ if } f(i)<f(i+1),\ i>0,\\
&M_{f\cdot s_i},\ \ &\text{ if } f(i)>f(i+1),\ i>0,\\
&qM_f,\ \ & \text{ if } f(i)=f(i+1),\ i>0, \\
&M_{f\cdot s_i}+(p-p^{-1})M_f, \ \ &\text{ if } f(1)\in \Iwr,\ i=0, \\
&M_{f\cdot s_i},\ \ &\text{ if } f(1)\in \Iwl,\ i=0, \\
&pM_f,\ \ & \text{ if } f(1)\in \Ib,\ i=0.
\end{aligned}
\right.$$
\end{lemma}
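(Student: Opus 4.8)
The plan is to verify that the operators defined in the statement respect all the defining relations of $\HB$. The crucial structural feature is that the action is \emph{local}: for $i>0$ the operator $H_i$ alters only the $i$-th and $(i+1)$-th tensor factors of $M_f$, while $H_0$ alters only the first factor. Consequently each relation need only be checked on the factors it actually touches. Thus the commuting relations $H_iH_j=H_jH_i$ for $|i-j|>1$ hold automatically, since the two operators act on disjoint sets of factors; the quadratic relation for $H_i$ ($i>0$) reduces to a computation on $\V\otimes\V$, that for $H_0$ to a computation on $\V$; the braid relation $H_iH_{i+1}H_i=H_{i+1}H_iH_{i+1}$ reduces to a computation on $\V^{\otimes 3}$, and since the formula for $H_i$ depends only on the local data $f(i),f(i+1)$ it is independent of $i$, so it suffices to treat $i=1$; finally the type B braid relation $H_0H_1H_0H_1=H_1H_0H_1H_0$ reduces to a computation on $\V\otimes\V$.

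Next I would dispatch the elementary relations. For the quadratic relation of $H_i$ with $i>0$, applying $H_i$ twice to $v_a\otimes v_b$ and splitting into the three cases $a<b$, $a>b$, $a=b$ shows in each case that $H_i^2=(q-q^{-1})H_i+1$, i.e. $(H_i-q)(H_i+q^{-1})=0$; the analogous computation on a single factor $v_a$, with the three cases $a\in\Iwr$, $a\in\Iwl$, $a\in\Ib$, yields $(H_0-p)(H_0+p^{-1})=0$. For the type A braid relation, the operator $H_i$ on two adjacent factors is precisely the standard Hecke $R$-matrix on tensor space (Jimbo), so $H_1H_2H_1=H_2H_1H_2$ on $\V^{\otimes 3}$ is the familiar braid identity for that $R$-matrix; I would confirm it by a case analysis on the relative order of the three indices $a,b,c$, treating the strict orderings together with the degenerate cases in which some indices coincide.

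The heart of the proof, and the step I expect to be the main obstacle, is the type B braid relation $H_0H_1H_0H_1=H_1H_0H_1H_0$ on $\V\otimes\V$. Here $H_0$ acts on the first factor by $v_a\mapsto v_{-a}+(p-p^{-1})v_a$, by $v_a\mapsto v_{-a}$, or by $v_a\mapsto p\,v_a$ according to whether $a$ lies in $\Iwr$, $\Iwl$, or $\Ib$ (note that indices in $\Ib$ are fixed by $s_0$), while $H_1$ acts on $v_a\otimes v_b$ by comparing $a$ and $b$. Verifying the identity thus amounts to expanding both composites on each basis vector $v_a\otimes v_b$, organized into cases according to the types of $a$ and $b$ (black, positive white, negative white) together with the orderings among $a$, $b$, and their negatives that govern the intermediate applications of $H_1$. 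I would group these cases by the pair of types of $(a,b)$ to keep the bookkeeping manageable, then in each surviving case expand both sides and match coefficients. The computation is purely mechanical but genuinely case-heavy, and the delicate point is tracking the mixed terms proportional to $(p-p^{-1})$ and $(q-q^{-1})$, where sign and ordering errors are easiest to make.
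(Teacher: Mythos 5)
The paper gives no proof of this lemma at all: it is imported verbatim, with attribution, from \cite[Lemma 2.1]{SW21}, so the only ``proof'' present in the paper is a citation. Your proposal is therefore necessarily a different route — the natural self-contained verification — and its architecture is correct: locality of the operators ($H_i$ touches only factors $i,i+1$; $H_0$ only factor $1$) gives the commutation relations $H_iH_j=H_jH_i$, $|i-j|>1$, for free; the quadratic relations correctly reduce to one- and two-factor checks, which you dispatch accurately; the formulas for $H_i$, $i>0$, are exactly Jimbo's action of $\Hy_{S_d}$ on tensor space, so the type A braid relation is standard; and $H_0H_1H_0H_1=H_1H_0H_1H_0$ indeed involves only the first two factors, hence reduces to $\V\otimes\V$. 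The one shortcoming is that for that last relation you stop at a (correctly organized) plan rather than executing it, and that case analysis is the only genuinely type-B content of the lemma — the only place where $p$ and $q$ interact. If you carry it out, two observations keep it short: every element of $\Iwl$ is less than every element of $\Ib$, which is less than every element of $\Iwr$, so the block types of $a$ and $b$ settle most comparisons and only within-block orderings remain free; and for fixed $a,b$ the span of the vectors $v_{\pm a}\otimes v_{\pm b}$, $v_{\pm b}\otimes v_{\pm a}$ is stable under both operators, so each case is a computation in a space of dimension at most $8$. As a sample, for $a=b\in\Iwr$ both composites equal
\begin{equation*}
q\,v_{-a}\otimes v_{-a}+(p-p^{-1})q\,v_a\otimes v_{-a}+(p-p^{-1})q^2\,v_{-a}\otimes v_a+\bigl(q^2-1+(p-p^{-1})^2q^2\bigr)\,v_a\otimes v_a,
\end{equation*}
and the remaining cases are of the same nature. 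In sum, your approach is sound and, once the finitely many cases are written down, yields a complete self-contained proof — which is more than the paper itself provides; the paper's proof-by-citation buys brevity at the cost of self-containedness.
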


Following \cite{SW21}, let $\Ianti$ denote the subset of $\Ibw^d$ consists of $g$ where \begin{align}  \label{def:adom}
\frac{m-1}{2}\geq g(1)\geq g(2)\geq \cdots \geq g(d).
\end{align}
We can decompose $\V^{\otimes d}$ into a direct sum of cyclic submodules generated by $M_f$, for  $f\in \Ianti$, as follows:
\begin{align}
  \label{eq:decomp}
\V^{\otimes d}=\bigoplus_{f \in \Ianti} \M_f, \qquad \text{ where }\;  \M_f =M_f\HB.
\end{align}

Furthermore, we recall the bar-involution on $\M_f$. Let $f \in \Ianti$. It follows directly from Lemma \ref{lem:HB} that the quasi-permutation $\HB$-module $\M_f$ has a standard basis $\{M_{f\cdot\sigma}\mid \sigma\in {}^fW_d\}$. We define a $\Q$-linear map $\iba$ on the module $\M_f$ by
\begin{align}
  \label{eq:barM}
\iba(q)=q^{-1},\quad \iba(p)=p^{-1},\quad \iba(M_{f \cdot \sigma})=M_f\bar H_{\sigma},\quad \forall \sigma \in {}^fW.
\end{align}
\begin{proposition} ({\rm \cite[Proposition 3.12]{SW21}})
  \label{prop:iHba}
Let $f \in \Ianti,$. The map $\iba$ on $\M_f$ in \eqref{eq:barM} is compatible with the bar operator on the Hecke algebra, i.e., 
\begin{align}
  \label{MxH}
\iba(xh)=\iba(x) \overline{h}, 
\qquad
\text{for all $x \in \M_f, \  h \in \HB$}.
\end{align}
In particular, $\iba^2 =\text{Id}$. (We shall call $\iba$ the bar involution on $\M_f$.)
\end{proposition}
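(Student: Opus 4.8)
The plan is to reduce the compatibility $\iba(xh)=\iba(x)\overline h$ to the single family of identities
\[
\iba(M_{f\cdot\sigma}H_i)=\iba(M_{f\cdot\sigma})\,\overline{H_i},\qquad \sigma\in{}^fW,\ 0\le i\le d-1. \quad(\star)
\]
The map $\iba$ is a well-defined $\Q$-semilinear endomorphism of $\M_f$ simply because $\{M_{f\cdot\sigma}\}$ is a basis and each $M_f\overline{H_\sigma}$ is a genuine element of $\M_f=M_f\HB$. Semilinearity lets me take $x=M_{f\cdot\sigma}$, and writing $h$ as a product of the generators $H_i$ reduces the general identity, by an induction on the number of factors together with $\overline{H_\sigma H_i}=\overline{H_\sigma}\,\overline{H_i}$, to $(\star)$. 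Once $(\star)$ holds for all such pairs the full compatibility follows, and then $\iba^2=\Id$ is automatic: $\iba^2(M_{f\cdot\sigma})=\iba(M_f\overline{H_\sigma})=\iba(M_f)H_\sigma=M_{f\cdot\sigma}$, using $\iba(M_f)=M_f$ and compatibility applied to $x=M_f$, $h=\overline{H_\sigma}$.

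To verify $(\star)$ I would split on the trichotomy for a pair $(\sigma,s_i)$ with $\sigma\in{}^fW$, read off from the root $\sigma(\alpha_i)$ via Lemma \ref{lemma:positive}: (A) $\sigma(\alpha_i)\in\Phi^+\setminus\Phi_f^+$, so $\sigma s_i\in{}^fW$ and $l(\sigma s_i)=l(\sigma)+1$; (B) $\sigma(\alpha_i)\in\Phi^-$, so $l(\sigma s_i)<l(\sigma)$ and $\sigma s_i\in{}^fW$; (C) $\sigma(\alpha_i)\in\Phi_f^+$, so $\sigma s_i=t\sigma$ with $t=s_{\sigma(\alpha_i)}\in W_f\cap R$ and $l(\sigma s_i)=l(\sigma)+1$. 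Using that every prefix of a reduced word of an element of ${}^fW$ again lies in ${}^fW$ (immediate from the minimality characterisation, since a left $S_f$-descent of a prefix would give a left $S_f$-descent of $\sigma$), case (A) iterated shows first that $M_fH_\sigma=M_{f\cdot\sigma}$ for all $\sigma\in{}^fW$. In case (A) the antidominance of $f$ makes Lemma \ref{lem:HB} return the clean value $M_{f\cdot\sigma}H_i=M_{f\cdot\sigma s_i}$, and $(\star)$ is then immediate from the definition of $\iba$ and $\overline{H_{\sigma s_i}}=\overline{H_\sigma}\,\overline{H_i}$. Case (B) reduces to (A): with $\tau=\sigma s_i\in{}^fW$ one has $M_{f\cdot\sigma}H_i=M_{f\cdot\tau}H_i^2$, and the parallel computation on the bar side using $\overline{H_i}^{\,2}=(q_i^{-1}-q_i)\overline{H_i}+1$ matches the two sides.

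The remaining case (C) is the heart of the matter. Here $t$ stabilises $f$, so $f\cdot\sigma s_i=f\cdot\sigma$ and Lemma \ref{lem:HB} gives the eigen-relation $M_{f\cdot\sigma}H_i=q_iM_{f\cdot\sigma}$ (with $q_i=p$ and $g(1)\in\Ib$ for $g=f\cdot\sigma$ when $i=0$). Since $\overline{H_i}-q_i^{-1}=H_i-q_i$, the identity $(\star)$ is equivalent to
\[
M_f\,\overline{H_\sigma}\,(H_i-q_i)=0, \quad(\dagger)
\]
i.e. to the assertion that $\iba(M_{f\cdot\sigma})$ is again a $q_i$-eigenvector of $H_i$. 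As $M_fH_\sigma(H_i-q_i)=M_{f\cdot\sigma}(H_i-q_i)=0$ and $H_i-q_i$ is bar-invariant, $(\dagger)$ amounts precisely to bar-stability of the right ideal $\ker\pi$, where $\pi\colon\HB\to\M_f$, $h\mapsto M_fh$. This is exactly where the non-parabolic nature of $W_f$ bites: by Lemma \ref{lem:length} the generator $t=s_{\sigma(\alpha_i)}$ of $W_f$ satisfies $l_f(t)=1$, yet its length $l(t)$ in $W$ may exceed $1$. When $t$ is an honest simple reflection $s_c\in S$ (so $l(t)=1$, and $q_c=q_i$) one has $H_{\sigma s_i}=H_{s_c}H_\sigma$ and $(\dagger)$ follows at once from $M_f\overline{H_{s_c}}=q_c^{-1}M_f$; but for the genuinely quasi-parabolic generators of $S_f$ — the sign changes of the displaced type-$B$ blocks, which are long reflections with $l(t)>1$ — one has $H_{\sigma s_i}\neq H_tH_\sigma$ and this shortcut fails. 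I expect this long-reflection subcase to be the main obstacle. I would resolve it by an induction on $l(\sigma)$, peeling simple reflections off $\sigma$ through cases (A) and (B) and using the explicit reduced expression of $t$ in $S$ to reduce $(\dagger)$ to a strictly shorter instance, ultimately grounding in the base eigen-relations for $M_f$; equivalently, one exhibits a bar-invariant generating set for $\ker\pi$ indexed by the cosets $W_f\backslash W$. Granting $(\dagger)$, cases (A)--(C) establish $(\star)$, hence the compatibility and $\iba^2=\Id$.
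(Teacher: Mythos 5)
Your reduction to the generator identity $(\star)$, the root-theoretic trichotomy (A)/(B)/(C), and the treatment of cases (A) and (B) are sound, and you have correctly located the crux of the proposition. But at exactly that point the argument stops being a proof. In case (C) you must establish $(\dagger)$: $M_f\,\overline{H_\sigma}\,(H_i-q_i)=0$ when $\sigma s_i=t\sigma$ for a reflection $t\in W_f\cap R$ that is \emph{not} simple in $W$, and this is left as a plan. The proposed induction on $l(\sigma)$ (``peeling simple reflections off $\sigma$ through cases (A) and (B)'') is never carried out, and it is not clear it can be carried out as stated: writing $\sigma=\sigma' s_j$ with $l(\sigma')<l(\sigma)$ and $\overline{H_\sigma}=\overline{H_{\sigma'}}\,\overline{H_j}$, one must move $\overline{H_j}$ past $H_i-q_i$, which requires braid-relation manipulations that change the index $i$, need not return the shorter pair to case (C), and produce cross terms from the quadratic relation; none of this is controlled in your sketch. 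Your fallback formulation --- ``exhibit a bar-invariant generating set for $\ker\pi$'' --- is not a repair but a restatement: since $\pi$ is surjective, well-definedness and compatibility of $\iba$ are \emph{equivalent} to bar-stability of $\ker\pi$, so this is the proposition itself in different clothing. Since the subcase $l(t)>1$ is precisely what separates the quasi-parabolic situation from Deodhar's parabolic one (where $t\in S$ and the identity is immediate, as you note), the proof is incomplete exactly where the statement is nontrivial.

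For comparison, the paper does not prove this proposition directly either: it cites \cite[Proposition 3.12]{SW21} for it, and its own contribution, the 3-step induction of \S\ref{sec:3step}, is engineered to bypass your case (C) altogether. There $\M_f$ is realized up to isomorphism as
\begin{equation*}
\cM_f=\bigl(M_{tr}\otimes_{\Hy(S_f)}\Hy(S_f^\bu)\bigr)\otimes_{\Hy(W_f^\bu)}\HB,
\end{equation*}
where every tensor step is induction along an honest subalgebra (two parabolic inclusions, plus the intermediate extension letting $H_0$ act by the scalar $p$, which is bar-compatible since $\overline{H_0}$ then acts by $p^{-1}$). On such an iterated induced module the bar map $n\otimes h\mapsto \bar n\otimes\bar h$ is well defined and satisfies \eqref{MxH} by the standard Deodhar argument at each step, and transporting it through the isomorphism $\cM_f\cong\M_f$ recovers $\iba$ on the standard basis. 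To finish your direct approach you would essentially have to redo the computation of \cite{SW21} for case (C); alternatively, you can graft in the 3-step induction as the actual resolution of $(\dagger)$.
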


\subsection{3-step induction}
\label{sec:3step}
While the module $\M_f$ is not an induced module in general, we outline in this subsection an alternative construction (up to an isomorphism) by a 3-step induction process. This 3-step induction process was proposed in \cite[Remark 3.13]{SW21}.

We fix an $f \in \Ianti$ of the form
\begin{align}
  \label{eq:f}
f= 
(\underbrace{a_1,\ldots,a_1}_{m_1},\ldots,\underbrace{a_k,\ldots,a_k}_{m_k},\underbrace{a_{k+1},\ldots,a_{k+1}}_{m_{k+1}},\ldots,\underbrace{a_l,\ldots,a_l}_{m_l}),
\end{align}
where $a_1> \ldots >a_k>a_{k+1}> \ldots > a_l,\ \{a_1,\ldots,a_k\}\subset \Ib,\ \{a_{k+1},\ldots, a_l\}\subset \Iwl$, and $m_1+\ldots+m_l=d$. The stabilizer subgroup of $f$ in $W_d$ is 
\begin{align}
  \label{eq:Wf}
W_f =W_{{m_1}}\times \ldots \times W_{{m_k}}\times S_{m_{k+1}}\times \ldots\times S_{m_l}. \end{align} 

Denote 
\begin{align*}
S_f &:= S_{{m_1}}\times \ldots \times S_{{m_k}} \times S_{m_{k+1}}\times \ldots\times S_{m_l},
\\
W_f^\bu &:= W_{d_\bu} \times S_{m_{k+1}}\times \ldots\times S_{m_l},
\quad
S_f^\bu := S_{d_\bu} \times S_{m_{k+1}}\times \ldots\times S_{m_l}.
\end{align*}
For any reflection group $W$, we denote the Hecke algebra associated to it by $\Hy(W)$. For any two reflection groups $W_1$ and $W_2$ we say $W_1\stackrel{\text{par.}}{\subset}W_2$ if $W_1$ is a parabolic subgroup of $W_2$.

Clearly, $W_f$ is a (not necessarily parabolic) reflection subgroup of $W_f^\bu$, and $W_f^\bu\stackrel{\text{par.}}{\subset}W_d$, and hence, the Hecke algebra $\Hy(W_f^\bu)$ can be viewed naturally as a subalgebra of $\HB$. Denote by ${}^fW^\bu$ the set of minimal length representatives of $W_f^\bu \backslash W_d$.

The set of minimal length representatives of $W_f \backslash W_f^\bu$ can be naturally identified with the set of minimal length representatives of $S_f \backslash S_f^\bu$, which will be denoted by ${}^f\mathcal D$. Since $S_f\stackrel{\text{par.}}{\subset}S_f^\bu$,  the Hecke algebra $\Hy(S_f)$ is naturally a subalgebra of the Hecke algebra $\Hy(S_f^\bu)$. Denote by $M_{tr}$ the trivial representation of $\Hy(S_f)$. We get an induced $\Hy(S_f^\bu)$-module: 
\[
\cM_f^\bu :=M_{tr} \otimes_{\Hy(S_f)} \Hy(S_f^\bu).
\]

 We now extend the $\Hy(S_f^\bu)$-action on $\cM_f^\bu$ to an $\Hy(W_f^\bu)$-action by letting $H_0$ act as $p\cdot \text{Id}$, one can and only needs to check that the braid relation $H_0H_1H_0H_1 =H_1H_0H_1H_0$ trivially holds. 

Now, as $W_f^\bu\stackrel{\text{par.}}{\subset}W_d$, we can define an induced $\HB$-module
\[
\cM_f := \cM_f^\bu \otimes_{\Hy(W_f^\bu)} \HB. 
\]
Then $\cM_f$ has a $\Q(q)$-basis given by $\{(1\otimes H_{\sigma_1}) \otimes H_{\sigma_2}\mid\sigma_1 \in {}^f\mathcal D, \sigma_2 \in {}^fW^\bu\}$. 

The set ${}^fW$ of minimal length representatives of $W_f \backslash W_d$ can be naturally identified as ${}^fW = {}^f\mathcal D \cdot {}^fW^\bu$, cf. \cite[Theorem 2.2.5]{DS00}. One can then show that there exists a natural $\HB$-module isomorphism 
\[
\cM_f \stackrel{\cong}{\longrightarrow} \M_f,
\qquad (1\otimes H_{\sigma_1}) \otimes H_{\sigma_2} \mapsto M_{f \cdot \sigma_1\sigma_2}.
\]

One advantage of this $3$-step induction construction is that the bar involution on  $\cM_f$ (compatible with the bar map on $\HB$) is directly visible from the induced module structure.

\subsection{Induced module $\mathcal M$}
Motivated by the 3-step induction procedure, we can generalize it to general Coxeter groups which contain a product of type B Weyl groups as a parabolic subgroup.

Suppose we have the following chain of reflection groups
$$W_f\subset W_{d}\stackrel{\text{par.}}{\subset} W,$$
where $W$ is any Coxeter group with a generating set $S$ and $\{s_0,s_1,\cdots,s_{d-1}\}\subset S$.

The Hecke algebra $\Hy=\Hy(W,S)$ associated to $(W,S)$ is a $\Q(\{q_s\mid s\in S\})$-algebra generated by $\{H_{s}\mid s\in S\}$ subject to the Braid group relations and the quadratic relations:
\begin{align*}
&(H_s-q_s)(H_s+q_s^{-1})=0,\ \ \ \ &\text{for } s\in S.
\end{align*}
Note that $\{q_s\mid s\in S \}$ is a set of parameters where $q_s=q_t$ if $s,t\in S$ are conjugate in $W$.

\begin{lemma} 
Every right coset of $W_f$ in $W_d$ has a unique minimal length representative.
\end{lemma}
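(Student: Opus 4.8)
The statement is exactly Proposition~\ref{Proposition:minimal} specialized to the chain $W_f \subset W_d$, so the cleanest route is simply to invoke that proposition. Indeed, $W_f$ is a reflection subgroup of the Coxeter group $W_d$ (of type $B_d$), so the hypothesis of Proposition~\ref{Proposition:minimal} is met with $W=W_d$, and the conclusion that ${}^fW_d$ consists of unique minimal length right coset representatives of $W_f$ in $W_d$ is precisely what we want.

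To make the proof self-contained rather than a bare citation, I would reproduce the short argument. The plan is to show two things: first, that every right coset of $W_f$ in $W_d$ \emph{contains} an element of ${}^fW_d$, and second, that it contains \emph{at most one} such element. For existence, pick any coset $W_f w$ and choose a representative of minimal length within it; a standard argument shows that a minimal length representative must satisfy $l(s\sigma)>l(\sigma)$ for all $s\in S_f$ (otherwise $s\sigma$ would be a shorter representative in the same coset, using Lemma~\ref{lemma:llf} to compare $l$ and $l_f$), hence lies in ${}^fW_d$ by the defining condition \eqref{eq:fW}.

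For uniqueness, I would run the comparison argument from Proposition~\ref{Proposition:minimal} verbatim. Suppose $w$ and $\sigma w$ both lie in ${}^fW_d$ with $\sigma \in W_f$. Applying Lemma~\ref{lem:length}(i) with $W=W_d$ gives $l(\sigma w)\le l(w)-l_f(\sigma)$, while Lemma~\ref{lem:length}(ii) gives $l(\sigma w)\ge l(w)+l_f(\sigma)$. Combining the two inequalities forces $l_f(\sigma)=0$, so $\sigma$ is the identity and $w=\sigma w$. This establishes that distinct elements of ${}^fW_d$ lie in distinct cosets, completing the proof.

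There is essentially no obstacle here: the only subtlety is that $W_f$ need not be parabolic in $W_d$, so one cannot appeal to the classical theory of minimal coset representatives for parabolic subgroups. But this is exactly the gap that Lemma~\ref{lemma:llf} and Lemma~\ref{lem:length} were set up to close, by relating the two length functions $l$ and $l_f$ on the reflection subgroup. Since those lemmas are already available, the result follows immediately, and the lemma is really just recording the $W=W_d$ instance of Proposition~\ref{Proposition:minimal} for later use in the module constructions.
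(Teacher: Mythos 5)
Your proposal is correct and matches the paper exactly: the paper's own proof of this lemma is simply the citation ``See Proposition~\ref{Proposition:minimal},'' which is precisely your main step, and your expanded existence/uniqueness argument is the same one the paper gives when proving Proposition~\ref{Proposition:minimal} via Lemma~\ref{lem:length}.
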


\begin{proof}
See Proposition \ref{Proposition:minimal}.
\end{proof}

Let ${}^f(W_d)$ (resp. ${}^dW$) denote the set of minimal length right coset representatives for $W_f$ (resp. ${W_d}$) in $W_d$ (resp. $W$), then we have the following corollary according to Theorem \ref{Theorem:minimal}:
\begin{corollary}
\label{cor:fW}
There is a bijection $${}^fW_d\times {}^dW\longrightarrow {}^fW,\qquad (\omega_1,\omega_2)\mapsto \omega_1\cdot \omega_2.$$
\end{corollary}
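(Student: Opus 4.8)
The plan is to obtain this statement as a direct specialization of Theorem \ref{Theorem:minimal}. That theorem is stated for an arbitrary chain of reflection subgroups $W_g \subseteq W_f \stackrel{\text{par.}}{\subseteq} W$, and I would instantiate it with the chain $W_f \subseteq W_d \stackrel{\text{par.}}{\subseteq} W$ furnished by the hypotheses of this subsection. Concretely, in the notation of Theorem \ref{Theorem:minimal} I take the innermost group to be $W_f$, the middle group to be $W_d$, and the ambient group to be $W$.

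Before invoking the theorem I would verify that its hypotheses are satisfied by this chain. The inclusion $W_f \subseteq W_d$ is an inclusion of reflection subgroups by the standing assumptions, while $W_d \stackrel{\text{par.}}{\subseteq} W$ is parabolic by the choice of chain; these are exactly the two conditions demanded (a reflection-subgroup inclusion followed by a parabolic one on the outer step). Under this substitution, the three sets appearing in Theorem \ref{Theorem:minimal} become $^f(W_d)$, the minimal length representatives of $W_f$ in $W_d$; $^dW$, the minimal length representatives of $W_d$ in $W$; and $^fW$, the minimal length representatives of $W_f$ in $W$. Hence the bijection produced by the theorem is precisely the map $(\omega_1,\omega_2)\mapsto \omega_1\cdot \omega_2$ from $^fW_d \times {}^dW$ onto $^fW$, which is the assertion of the corollary.

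Since the entire content is a translation of notation, there is no genuine obstacle here. The only point deserving care is the bookkeeping of which group plays the role of $W_g$, $W_f$, and $W$ in Theorem \ref{Theorem:minimal}, together with confirming that the parabolic hypothesis is imposed on the correct (outer) inclusion $W_d \stackrel{\text{par.}}{\subseteq} W$ rather than on $W_f \subseteq W_d$, which need not be parabolic in general.
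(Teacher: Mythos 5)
Your proposal is correct and matches the paper exactly: the corollary is obtained there, as in your argument, by specializing Theorem \ref{Theorem:minimal} to the chain $W_f \subseteq W_d \stackrel{\text{par.}}{\subseteq} W$, with the parabolic hypothesis placed on the outer inclusion. Your care in checking that the roles of $W_g$, $W_f$, $W$ are assigned correctly and that only the outer inclusion needs to be parabolic is precisely the relevant bookkeeping.
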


It follows directly from Lemma \ref{lem:HB} that the quasi-permutation $\HB$-module $\M_f$ has a standard basis $\{M_{f\cdot\sigma}\mid \sigma\in {}^fW_d\}$. For the sake of readability we define $M^f_{ \omega}=M_{f\cdot \omega}$ for any $\omega \in W_d$ and rewrite the $\HB$-action on $\M_f$ as follows (cf. \cite[Proposition 3.8]{SW21}):
\begin{equation}
\label{equa:typeB act}
\begin{aligned}
M^f_{ \sigma} H_i =
\begin{cases}
M^f_{\sigma s_i} +(q_i -q_i^{-1}) M^f_{\sigma}, & \text{ if }\ \sigma s_i<\sigma,
\\
M^f_{ \sigma s_i}, & \text{ if }\ \sigma s_i> \sigma \text{ and } \sigma s_i \in {}^fW_d,
\\
q_iM^f_{\sigma}, & \text{ if } \  i\neq 0,\ \sigma s_i> \sigma \text{ and } \sigma s_i \not\in {}^fW_d,
\end{cases}
\end{aligned}
\end{equation}
for $i=0,1,\ldots,d-1$.

Then we introduce an induced module
$$\mathcal M=\M_f\otimes_{\HB}\Hy.$$
We still call $\mathcal M$ a  quasi-permutation module. We see that the induced module $\mathcal M$ has a natural standard basis $\{m_{\sigma x}=M^f_{\sigma}\otimes H_x \mid \sigma\in {}^fW_d,x\in {}^dW\}$.

\begin{proposition}
\label{prop:HM}
The action of $\Hy$ on $\mathcal M$ is given by 
\begin{align*}
m_{\sigma x}H_s =
\begin{cases}
m_{\sigma xs} +(q_s -q_s^{-1}) m_{\sigma x}, & \text{ if }\ \sigma x s<\sigma x,
\\
m_{\sigma xs}, & \text{ if }\ \sigma xs> \sigma x \text{ and } \sigma xs \in {}^fW,
\\
q_sm_{\sigma x}, & \text{ if } \ \sigma x s> \sigma x \text{ and } \sigma x s \not\in {}^fW,
\end{cases}
\end{align*}
for any $s\in S$.
\end{proposition}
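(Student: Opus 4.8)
The plan is to reduce the computation of $m_{\sigma x}H_s$ to the already-known $\HB$-action on $\M_f$, using the free-module decomposition of $\Hy$ over the parabolic subalgebra $\HB$. Since $W_d$ is a parabolic subgroup of $W$, every $w\in W$ factors uniquely as $w=ux$ with $u\in W_d$, $x\in{}^dW$ and $l(w)=l(u)+l(x)$; hence $\Hy$ is free as a left $\HB$-module on $\{H_x\mid x\in{}^dW\}$, and $m_{\sigma x}H_s=M^f_\sigma\otimes(H_xH_s)$. Taking $u=\sigma\in{}^fW_d\subset W_d$ gives $l(\sigma x)=l(\sigma)+l(x)$, and $\sigma x\in{}^fW$ is exactly the image of $(\sigma,x)$ under the bijection of Corollary \ref{cor:fW}. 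The task is thus to expand $H_xH_s$ in this basis and re-read the answer as a standard basis vector of $\mathcal M$; I would split on whether $xs\in{}^dW$.

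If $xs\in{}^dW$, then $H_xH_s=H_{xs}$ when $l(xs)>l(x)$, while $H_xH_s=H_{xs}+(q_s-q_s^{-1})H_x$ when $l(xs)<l(x)$, using $H_s^2=(q_s-q_s^{-1})H_s+1$. Tensoring with $M^f_\sigma$ yields $m_{\sigma xs}$, respectively $m_{\sigma xs}+(q_s-q_s^{-1})m_{\sigma x}$; and since $l(\sigma xs)=l(\sigma)+l(xs)$, the inequality between $l(\sigma xs)$ and $l(\sigma x)$ matches that between $l(xs)$ and $l(x)$. This places the two outcomes in case (ii) and case (i) of the statement, respectively.

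The substantive case is $xs\notin{}^dW$. Writing $s=s_\beta$, the analogue of Lemma \ref{lemma:positive} for $W_d$ gives $xs\notin{}^dW$ iff $x\beta$ is a positive root of $W_d$; this forces $l(xs)=l(x)+1$ and puts $xs$ in the right $W_d$-coset of $x$. As $x$ is the minimal representative of that coset, $xs=s_ix$ for a simple reflection $s_i$ of $W_d$ with $l(s_ix)=1+l(x)$, so $H_xH_s=H_{s_i}H_x$. Moving $H_{s_i}\in\HB$ across the tensor product over $\HB$ gives $m_{\sigma x}H_s=(M^f_\sigma H_i)\otimes H_x$, which I evaluate by \eqref{equa:typeB act}. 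The three sub-cases $\sigma s_i<\sigma$; $\sigma s_i>\sigma$ with $\sigma s_i\in{}^fW_d$; and $\sigma s_i>\sigma$ with $\sigma s_i\notin{}^fW_d$ produce $m_{(\sigma s_i)x}+(q_i-q_i^{-1})m_{\sigma x}$, $m_{(\sigma s_i)x}$, and $q_im_{\sigma x}$. Matching the statement then rests on three observations: $s_i=xsx^{-1}$ is conjugate to $s$, so $q_i=q_s$; $\sigma xs=(\sigma s_i)x$, so $l(\sigma xs)\gtrless l(\sigma x)$ iff $\sigma s_i\gtrless\sigma$; and, by Corollary \ref{cor:fW}, $\sigma xs\in{}^fW$ iff $\sigma s_i\in{}^fW_d$. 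These send the sub-cases to cases (i), (ii), (iii), respectively.

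The main obstacle I anticipate is the bookkeeping in this last scenario, in particular the equivalence $\sigma xs\in{}^fW\iff\sigma s_i\in{}^fW_d$, where Corollary \ref{cor:fW} together with length additivity does the real work: $\sigma s_i\in{}^fW_d$ makes $(\sigma s_i,x)$ a legitimate index, whereas if $\sigma s_i\notin{}^fW_d$ then $\sigma xs=(\sigma s_i)x$ cannot be a minimal coset representative. A secondary point is the special generator $i=0$: when $s_i=s_0$ lies in the stabilizer $W_f$, the relevant instance of the $\HB$-action is the value $q_0M^f_\sigma=pM^f_\sigma$ furnished by Lemma \ref{lem:HB}, and I would verify directly that this still falls under case (iii), so that no case is lost.
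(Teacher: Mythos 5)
Your proposal is correct and takes essentially the same route as the paper's proof: both split on whether $xs$ remains in ${}^dW$, handle those cases via the quadratic relation in $\Hy$, and in the remaining case write $xs=s_ix$ with $s_i$ a generator of $W_d$, pull $H_i$ across the tensor product over $\HB$, apply \eqref{equa:typeB act}, and use conjugacy of $s_i$ and $s$ to identify $q_i=q_s$. The only local difference is the sub-case $\sigma s_i>\sigma$, $\sigma s_i\notin{}^fW_d$: the paper concludes $\sigma xs\notin{}^fW$ by citing \cite[Theorem 3.6]{SW21} (which gives $\sigma s_i\in W_f\sigma$), whereas you deduce it from Corollary \ref{cor:fW} together with uniqueness of the parabolic factorization $W=W_d\cdot{}^dW$ --- a valid and slightly more self-contained justification.
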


\begin{proof}
For any $\sigma \in {}^fW_d,x\in {}^dW,s\in S$, we have several possible cases here:

(i) If $xs<x$, then we must have $xs \in {}^dW$. Thus $\sigma x s<\sigma x$ and $\sigma x s\in {}^fW$. In this case we have
$H_xH_s=H_{xs}H_s^2=H_{xs}+(q_s-q_s^{-1})H_x$. Therefore
$$m_{\sigma x}H_s=M^f_{ \sigma}\otimes H_x H_s=M^f_{\sigma }\otimes H_{xs} +(q_s -q_s^{-1}) M^f_{\sigma}\otimes H_x=m_{\sigma xs} +(q_s -q_s^{-1}) m_{\sigma x}.$$

(ii) If $xs>x$ and $xs\in {}^dW$, then we have $\sigma xs\in {}^fW$ and
$$m_{\sigma x}H_s=M^f_{ \sigma}\otimes H_x H_s=M^f_{\sigma }\otimes H_{xs}=m_{\sigma xs}.$$

(iii) If $xs>x$ and $xs\notin {}^dW$, then we must have $xs=s_jx$ for some $j\in \{0,1,\ldots,d-1\}$. 

(iii-a) If $l(\sigma s_j)<l(\sigma)$, we have $\sigma s_j\in {}^f W_d$. Thus we have $\sigma s_jx=\sigma xs<\sigma x$ and 
\begin{align*}
m_{\sigma x}H_s=&M^f_{ \sigma}\otimes H_x H_s=M^f_{ \sigma}\otimes H_j H_x=M^f_{ \sigma}H_j\otimes H_x\\
=&M^f_{ \sigma s_j}\otimes H_x+(q_j-q_j^{-1})M^f_{ \sigma}\otimes H_x \\
=&m_{\sigma s_jx}+(q_j-q_j^{-1})m_{ \sigma x}=m_{\sigma x s}+(q_j-q_j^{-1})m_{ \sigma x}.
\end{align*}
Since $s$ and $s_j$ are conjugate to each other, we have $q_j=q_s$.

 (iii-b) If $l(\sigma s_j)>l(\sigma)$ and $\sigma s_j\in {}^fW_d$, then we have $\sigma xs>\sigma x,\ \sigma xs \in {}^fW$ and
\begin{align*}m_{\sigma x}H_s =&M^f_{ \sigma}\otimes H_x H_s=M^f_{ \sigma}\otimes H_j H_x=M^f_{ \sigma}H_j\otimes H_x=M^f_{ \sigma s_j}\otimes H_x\\
=&m_{\sigma s_jx}=m_{\sigma x s}.
\end{align*}

(iii-c) If $l(\sigma s_j)>l(\sigma)$ and $\sigma s_j\notin {}^fW_d$, then there are two possible cases according to \cite[Theorem 3.6 $(iii),(iii_0)$]{SW21}, in either cases we have that $\sigma s_j\in W_f \sigma$ and hence the unique minmal length representative of $W_f \sigma x s$ is $\sigma x$. 

Thus $\sigma x s\notin {}^fW$ and thanks to \eqref{equa:typeB act} we have
$$m_{\sigma x}H_s=M^f_{ \sigma}\otimes H_x H_s=M^f_{ \sigma}\otimes H_j H_x=M^f_{ \sigma}H_j\otimes H_x=q_jHm_{\sigma x}.$$
Again we have $q_j=q_s$ by the conjugacy of $s$ and $s_j$ in $W$.
\end{proof}

\subsection{Canonical basis on $\mathcal M$}
Now we define a bar-involution on $\mathcal M$ by 
$${}^{-}\colon m\otimes h \mapsto \iba (m) \otimes \bar h,\quad \forall m\in \M_f,\ h\in \Hy,$$
where $\iba$ is from Proposition \ref{prop:iHba} and it is compatible with the bar involution on $\HB$. Thus above bar map on $\mathcal M$ is well-defined.

For the formulation of canonical basis on $\M_f$, we shall specialize to a one-parameter setting. Suppose $q_s\in q^{\Z}$ for all $s\in S$. Then $\Hy$ becomes a $\Q(q)$-algebra. The bar involution $'\overline{\ \cdot\ }'$ remains valid.
\begin{theorem}
\label{theorem:CB}
Suppose $q_s\in q^{\Z}$ for all $s\in S$, then for each $w\in{}^fW$, there exists a unique element $C_w\in \mathcal M$ such that
\begin{enumerate}
\item[(i)]
$\overline{ C_{w}} =C_{w}$,
\item[(ii)]
$C_{w} \in m_w+\sum_{y\in {}^fW }\limits 
q^{-1}\Z[q^{-1}] m_y.$
\end{enumerate}
Moreover we have 	
\begin{enumerate}
\item[(ii$'$)]  $C_{w} \in m_{w} +\sum_{y\in {}^fW, y< w}\limits 
q^{-1}\Z[q^{-1}] m_y.$
\end{enumerate}
\end{theorem}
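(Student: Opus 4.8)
The plan is to establish existence and uniqueness of the $C_w$ by the standard Kazhdan--Lusztig machinery, adapted to the induced module $\mathcal M$. The key structural input is Proposition \ref{prop:HM}, which shows that the action of each generator $H_s$ on the standard basis $\{m_{\sigma x}\}$ of $\mathcal M$ takes exactly the same shape as the action on a (parabolic-type) Hecke module: multiplying $m_w$ by $H_s$ either produces $m_{ws}$ (possibly with a lower-order correction $(q_s-q_s^{-1})m_w$), or scales $m_w$ by $q_s$. I would first record the uniqueness half, which is formal: if two elements $C_w, C_w'$ both satisfy (i) and (ii), then their difference $D=C_w-C_w'$ is bar-invariant and lies in $\sum_y q^{-1}\Z[q^{-1}]m_y$; a bar-invariant element all of whose coefficients lie in $q^{-1}\Z[q^{-1}]$ must be zero, since bar-invariance forces each coefficient $c_y(q)$ to satisfy $c_y(q^{-1})=c_y(q)$ while $c_y(q)\in q^{-1}\Z[q^{-1}]$ forces $c_y(q^{-1})\in q\Z[q]$, whence $c_y=0$.

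For existence I would argue by induction on the length $l(w)$ for $w\in{}^fW$. The base case is $w=e$ (or the minimal-length representative), where $m_e$ is itself bar-invariant. For the inductive step, pick $w\in{}^fW$ with $l(w)>0$ and choose $s\in S$ with $ws<w$; then $y:=ws\in{}^fW$ and $l(y)<l(w)$, so by induction $C_y$ exists. The natural candidate is to form $C_y H_s$ and then subtract off a $\Z[q,q^{-1}]$-combination of lower $C_z$'s to correct the diagonal coefficient. Concretely, one checks from Proposition \ref{prop:HM} that $C_y H_s$ is bar-invariant (because $\overline{C_y H_s}=\overline{C_y}\,\overline{H_s}=C_y(H_s+(q_s^{-1}-q_s))$, and $C_y H_s - (q_s^{-1}-q_s)C_y=\overline{C_y H_s}$, using $\overline{H_s}=H_s+(q_s^{-1}-q_s)$), and that $C_y H_s\in m_w+\sum_{z<w}\Z[q,q^{-1}]m_z$ with the leading coefficient of $m_w$ equal to $1$. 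Then $C_w:=C_y H_s-\sum_{z<w}\mu_z C_z$, where the $\mu_z\in\Z[q,q^{-1}]$ are chosen (bar-invariant, in fact in $\Z$) to kill all coefficients not in $q^{-1}\Z[q^{-1}]$, is bar-invariant and satisfies (ii). The triangularity claim (ii$'$) then comes for free: since $C_y H_s$ and each $C_z$ with $z<w$ are supported on $\{z\in{}^fW: z\le w\}$, so is $C_w$, which upgrades (ii) to (ii$'$).

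The main obstacle I expect is verifying the two facts used in the inductive step, namely that $C_y H_s$ remains bar-invariant and that it has the correct leading term $m_w$ with unit coefficient and otherwise only strictly lower terms. Bar-invariance requires knowing that the bar map on $\mathcal M$ (defined by $\overline{m\otimes h}=\iba(m)\otimes\bar h$) is compatible with right multiplication by $\Hy$ in the sense $\overline{X\cdot h}=\overline X\cdot\overline h$ for $X\in\mathcal M$; this follows by combining Proposition \ref{prop:iHba}, which gives compatibility of $\iba$ with the bar operator on $\HB$, with the fact that $\mathcal M=\M_f\otimes_{\HB}\Hy$ is induced, so that $\overline{(m\otimes h)h'}=\iba(m)\otimes\overline{hh'}=\overline{m\otimes h}\,\overline{h'}$. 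The triangularity of $C_y H_s$ reduces, via the explicit three-case formula of Proposition \ref{prop:HM}, to the standard observation that right multiplication by $H_s$ is, modulo strictly shorter elements, the involution $z\mapsto zs$ on ${}^fW$ together with a diagonal $q_s$-scaling on the elements whose cosets are fixed; here one must use that $w\in{}^fW$ and $ws<w$ together force the first case of Proposition \ref{prop:HM} to apply to $m_y H_s$, producing $m_w$ with coefficient $1$, while all other $m_zH_s$ contribute only terms $m_{z'}$ with $z'\le w$. Once these two points are in hand, the remainder is the routine Kazhdan--Lusztig coefficient bookkeeping, which I would not spell out in full.
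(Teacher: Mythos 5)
Your overall strategy---induction on length, multiplying $C_y$ (for $y=ws<w$) by the generator and then correcting by lower canonical basis elements---is the same Soergel-style argument that the paper invokes. However, there is a genuine error at the heart of your inductive step: $C_yH_s$ is \emph{not} bar-invariant, and your own parenthetical computation shows this, since $\overline{C_yH_s}=\overline{C_y}\,\overline{H_s}=C_y\bigl(H_s+(q_s^{-1}-q_s)\bigr)=C_yH_s+(q_s^{-1}-q_s)C_y\neq C_yH_s$. Once this fails, your prescription collapses: if every $\mu_z$ is bar-invariant (you even take them in $\Z$), then $\sum_{z<w}\mu_z C_z$ is bar-invariant, so $C_yH_s-\sum_{z<w}\mu_z C_z$ can never be bar-invariant. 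The repair is exactly the device the paper uses: replace $H_s$ by the bar-invariant element $b_s$, where $b_s=H_s+q_s^{-1}$ when $q_s\in q^{\Z_{>0}}$ and $b_s=H_s-q_s$ when $q_s\in q^{\Z_{<0}}$, and run the induction with $C_yb_s$ instead of $C_yH_s$. The sign-dependent choice is not cosmetic: it guarantees that the extra diagonal term it introduces ($q_s^{-1}m_y$, resp.\ $-q_sm_y$) already lies in $q^{-1}\Z[q^{-1}]m_y$, so that after rewriting Proposition \ref{prop:HM} in the form \eqref{eq:MM1}--\eqref{eq:MM2} every off-leading contribution is either in $q^{-1}\Z[q^{-1}]$ or can be removed by bar-invariant (integer) multiples of lower $C_z$'s. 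This is precisely the unequal-parameter subtlety ($q_s\in q^{\Z}$ with either sign of exponent) that your write-up does not address at all.

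A second, smaller gap is in your uniqueness argument: you assert that bar-invariance of $D=\sum_y c_y m_y$ forces $c_y(q^{-1})=c_y(q)$ for every $y$. That is false as stated, because the bar map on $\mathcal M$ does not act coefficient-wise on the standard basis ($\overline{m_y}$ equals $m_y$ plus a combination of lower terms). The correct argument takes $y_0$ maximal in the support of $D$, where unitriangularity of the bar map gives $c_{y_0}=\overline{c_{y_0}}$, and combines this with $c_{y_0}\in q^{-1}\Z[q^{-1}]$ to conclude $c_{y_0}=0$, then descends through the support. Note that this unitriangularity, $\overline{m_y}\in m_y+\sum_{z<y}\Z[q,q^{-1}]m_z$, is itself something to be established (it follows from \eqref{eq:MM1}--\eqref{eq:MM2} by induction on the Bruhat order) and is also needed in your existence step when you claim $C_yb_s$ has unit leading coefficient and strictly lower remaining terms; the paper subsumes both points in its citation of Soergel's argument.
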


The set $\{ C_{\sigma} | \sigma \in {}^fW \}$ is called a {\em canonical basis} of $\mathcal M$. 

\begin{proof}
Let $w=\sigma x$ where $\sigma \in {}^fW_d,\ x\in {}^dW$. 

For those $s\in S$ such that $q_s\in q^{\Z_{>0}}$, we set $b_s=H_s+q_s^{-1}$, which is bar invariant. Now Proposition \ref{prop:HM} can be rewritten as
\begin{align} \label{eq:MM1}
m_{\sigma x}H_s=
\begin{cases}
m_{\sigma xs} +q_s m_{\sigma x}, & \text{ if }\ \sigma x s<\sigma x,
\\
m_{\sigma xs}+q^{-1}_s m_{\sigma x}, & \text{ if }\ \sigma xs> \sigma x \text{ and } \sigma xs \in {}^fW,
\\
(q_s+q_s^{-1})m_{\sigma x}, & \text{ if } \ \sigma x s> \sigma x \text{ and } \sigma x s \not\in {}^fW.
\end{cases}
\end{align}

For those $s\in S$ such that $q_s\in q^{\Z_{<0}}$, we set $b_s=H_s-q_s$, which is also bar invariant. Now Proposition \ref{prop:HM} can be rewritten as
\begin{align} \label{eq:MM2}
m_{\sigma x}H_s =
\begin{cases}
m_{\sigma xs} -q^{-1}_s m_{\sigma x}, & \text{ if }\ \sigma x s<\sigma x,
\\
m_{\sigma xs}-q_s m_{\sigma x}, & \text{ if }\ \sigma xs> \sigma x \text{ and } \sigma xs \in {}^fW,
\\
0, & \text{ if } \ \sigma x s> \sigma x \text{ and } \sigma x s \not\in {}^fW.
\end{cases}
\end{align}

Now the existence of $C_{\sigma}$ satisfying Conditions~(i) and (ii$'$) can be proved using \eqref{eq:MM1} and \eqref{eq:MM2} by an induction on the Chevalley-Bruhat order for $\sigma$, following exactly the same argument as for \cite[Theorem 3.1]{So97}. 
\end{proof}

\begin{remark}
If we consider disjoint type $B$ subgroups $W_{d_i}\subset W$ and quasi-parobolic subgroups $W_{f_i}\subset W_{d_i}$, then we can easily generate this construction to the level of $$\cdots\times W_{f_i}\times \cdots\subset \cdots\times W_{d_i}\times \cdots\subset W.$$
\end{remark}

\section{ Quasi-parabolic modules in type $G_2$ and $F_4$}
\label{sec:4}
In this section we give explicit examples of {\bf non-parabolic} reflection subgroups of Coxeter group of different types and construct the corresponding quasi-permutation modules.

We often use the shorthand notation $s_{ijk\cdots} =s_i s_j s_k \cdots$ below.

\subsection{Type $G_2$}
\label{sec:G2}
Let $W=W(G_2)$ be the Coxeter group of type $G_2$. The simple roots of $W(G_2)$ are denoted by $\alpha_1, \alpha_2$, with $\alpha_1$ being the long simple root.
 The Hecke algebra $\Hy =\Hy(G_2)$ of type $G_2$ is generated by $H_1, H_2$ subject to relations:
  \begin{align*}
    (H_1+ p^{-1})(H_1-p )&=0,  \quad 
    (H_2+ q^{-1})(H_2-q )=0,  \\ 
    \text{(braid group relation) \quad} (H_1H_2)^3&=(H_2H_1)^3.
  \end{align*}

\subsubsection{Subgroup of type $A_2$}
\label{subsub:G2A2}

 We consider $W_f =\langle s_2, s_{121} \rangle =\{e, s_2, s_{121}, s_{1212}, s_{2121}, s_{21212} \}$, with ${}^fW =\{e, s_1\}.$ Then we consider the 2-dimensional vector space 
  \[
  \M_f =\text{span} \{M_f, M_{f\cdot s_1} \}.
  \]
  
\begin{proposition}
$\M_f$ is a right $\Hy$-module given by
\begin{align*}
M_f H_1 =M_{f\cdot s_1},& \quad
M_{f\cdot s_1} H_1 =  (p -p^{-1}) M_{f\cdot s_1} +M_f, 
\\
M_f H_2 =q M_f,& \quad
M_{f\cdot s_1} H_2 = q M_{f\cdot s_1}.
\end{align*}
\end{proposition}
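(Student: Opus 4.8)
The plan is to verify directly that the two operators defined by the displayed formulas satisfy the defining relations of $\Hy(G_2)$: the two quadratic relations and the order-$6$ braid relation $(H_1H_2)^3=(H_2H_1)^3$. Since $\Hy(G_2)$ is presented by generators and relations, and every such relation is an identity of words in $H_1,H_2$, it suffices to produce operators on the two-dimensional space $\M_f=\mathrm{span}\{M_f,M_{f\cdot s_1}\}$ that respect these relations; this is precisely what makes $\M_f$ into a well-defined right $\Hy(G_2)$-module.

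First I would record the matrices of $H_1$ and $H_2$ in the ordered basis $(M_f,M_{f\cdot s_1})$, reading the coefficients off the stated formulas. One finds that $H_1$ acts by $\begin{pmatrix}0&1\\1&p-p^{-1}\end{pmatrix}$ while $H_2$ acts by the scalar $q\,\Id$. The crucial structural observation, which keeps the whole verification short, is that $H_2$ acts as a scalar.

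Next I would check the quadratic relations. For $H_2$ this is immediate, since $H_2-q\,\Id=0$ already forces $(H_2-q)(H_2+q^{-1})=0$. For $H_1$ I would either square the $2\times2$ matrix and compare the result with $(p-p^{-1})H_1+\Id$, or observe that its characteristic polynomial is $\lambda^2-(p-p^{-1})\lambda-1=(\lambda-p)(\lambda+p^{-1})$, so that $(H_1-p)(H_1+p^{-1})=0$ by Cayley--Hamilton.

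Finally I would verify the braid relation, which in type $G_2$ is ordinarily the delicate step because it equates two alternating words of length six. Here it collapses entirely: since $H_2=q\,\Id$ is central, $(H_1H_2)^3=q^3H_1^3=(H_2H_1)^3$ with no further computation. Thus the only genuine content is the single quadratic identity for $H_1$, and the step one would expect to be the main obstacle—the braid relation—is trivial precisely because of the scalar action of $H_2$.
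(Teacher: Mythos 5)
Your proposal is correct and follows essentially the same route as the paper: the paper's proof likewise rests on the single observation that $H_2$ acts as $q\cdot\Id$, which makes the $G_2$ braid relation $(H_1H_2)^3=(H_2H_1)^3$ hold trivially, with the quadratic relations (which you verify explicitly via the matrix of $H_1$ and Cayley--Hamilton) left as immediate. Your write-up is simply a more detailed version of the same argument.
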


\begin{proof}
Since $H_2$ acts on $\M_f$ as $q\cdot \text{Id}$, clearly $H_1, H_2$ satisfy the braid relation for $G_2$. 
\end{proof}

\subsubsection{Subgroup of type $A_1\times A_1$}

We consider $W_f =\langle s_2, s_{12121} \rangle =\{e, s_2, s_{12121}, s_{121212}\}$,  ${}^fW =\{e, s_1, s_1s_2\}.$ Then we consider the 3-dimensional vector space 
  \[
  \M_f =\text{span} \{M_f, M_{f\cdot s_1}, M_{f\cdot s_1s_2} \}.
  \]

\begin{proposition}
\label{prop:d3}
$\M_f$ is a right $\Hy$-module given by
\begin{align*}
M_f H_1 =M_{f\cdot s_1}, \quad
M_{f\cdot s_1} H_1 &= (p -p^{-1}) M_{f\cdot s_1} +M_f, \quad
M_{f\cdot s_1s_2} H_1 = p M_{f\cdot s_1s_2}, 
\\
M_f H_2 =q M_f, \quad
M_{f\cdot s_1} H_2 &=  M_{f\cdot s_1s_2}, \quad
M_{f\cdot s_1s_2} H_2 = (q-q^{-1}) M_{f\cdot s_1s_2}+ M_{f\cdot s_1}.
\end{align*}
\end{proposition}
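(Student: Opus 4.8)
The plan is to check that the six displayed formulas are consistent with the defining relations of $\Hy(G_2)$, namely the two quadratic relations and the braid relation $(H_1H_2)^3=(H_2H_1)^3$; since $\M_f$ is spanned by the three vectors listed, asserting a right module structure is exactly asserting that the operators of right multiplication by $H_1$ and $H_2$ satisfy these relations. Ordering the basis as $(M_f, M_{f\cdot s_1}, M_{f\cdot s_1s_2})$ and abbreviating $a=p-p^{-1}$, $b=q-q^{-1}$, I would first record the matrices of the two actions,
\[
M_1=\begin{pmatrix} 0 & 1 & 0\\ 1 & a & 0\\ 0 & 0 & p\end{pmatrix},\qquad M_2=\begin{pmatrix} q & 0 & 0\\ 0 & 0 & 1\\ 0 & 1 & b\end{pmatrix},
\]
so that right multiplication sends a coordinate row vector $v$ to $vM_1$ and $vM_2$ respectively, and a word $H_iH_j$ acts as the product $M_iM_j$.

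The quadratic relations require essentially no work. Each $M_i$ preserves a splitting of $\M_f$ into an invariant two-dimensional block and an invariant one-dimensional block: for $H_1$ the block $\langle M_f, M_{f\cdot s_1}\rangle$ carries the matrix $\left(\begin{smallmatrix}0&1\\1&a\end{smallmatrix}\right)$, whose characteristic polynomial $\lambda^2-a\lambda-1$ equals $(\lambda-p)(\lambda+p^{-1})$, while $M_{f\cdot s_1s_2}$ spans a line on which $H_1$ acts by the scalar $p$, again a root of that polynomial; the situation for $H_2$ is identical with $q$ in place of $p$. Hence $(H_i-q_i)(H_i+q_i^{-1})=0$ on all of $\M_f$, so the quadratic relations are built into the formulas.

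The one substantive point is the braid relation, and this is where I expect the main obstacle to lie. Unlike the type $A_2$ subgroup of \S\ref{subsub:G2A2}, here neither generator acts as a scalar, so the shortcut ``one generator is central, hence the braid relation is automatic'' is unavailable, and $(H_1H_2)^3=(H_2H_1)^3$ must be verified head-on. To cut down the bookkeeping I would exploit that $M_1$ and $M_2$ are symmetric matrices, so that $M_2M_1=(M_1M_2)^{T}$ and therefore $(M_2M_1)^3=\bigl((M_1M_2)^3\bigr)^{T}$; consequently the braid relation is equivalent to the single statement that the cube $(M_1M_2)^3$ is symmetric, which halves the number of entries to be compared. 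Carrying this out amounts to forming $A=M_1M_2$, then $A^2$ and $A^3$, and inspecting the three off-diagonal pairs of $A^3$. This is a degree-six identity in the two independent parameters $p$ and $q$ --- with $a=p-p^{-1}$ and $b=q-q^{-1}$ entering the entries of $A^3$ in a genuinely intertwined fashion --- and keeping careful track of both parameters through the computation is the delicate part of the argument, exactly the kind of two-parameter interaction that one anticipates could obstruct the analogous claim in larger Coxeter groups.
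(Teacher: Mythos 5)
Your setup coincides with the paper's own approach (its entire proof is ``we check the braid relation holds by computer''): the matrices $M_1,M_2$ are transcribed correctly, the block argument for the quadratic relations is fine, and the observation that $M_2M_1=(M_1M_2)^{T}$ reduces the braid relation to the symmetry of $(M_1M_2)^3$ is valid and genuinely halves the work. The gap is that you stop exactly at the step that carries all the content: you never compute $(M_1M_2)^3$ --- and if you do, the verification \emph{fails}. With $a=p-p^{-1}$, $b=q-q^{-1}$ one finds
\[
A=M_1M_2=\begin{pmatrix}0&0&1\\ q&0&a\\ 0&p&pb\end{pmatrix},\qquad
A^3=\begin{pmatrix}pq&p^2b&pa+p^2b^2\\ apq&pq+ap^2b&a^2p+pqb+ap^2b^2\\ p^2bq&p^2a+p^3b^2&pq+2ap^2b+p^3b^3\end{pmatrix},
\]
and the $(1,2)$ entry $p^2b=p^2q-p^2q^{-1}$ differs from the $(2,1)$ entry $apq=p^2q-q$ unless $p^2=q^2$; each of the other two off-diagonal pairs yields exactly the same condition. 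Equivalently, on basis vectors, $M_f(H_1H_2)^3$ has $M_{f\cdot s_1}$-coefficient $p^2(q-q^{-1})$, while $M_f(H_2H_1)^3$ has $M_{f\cdot s_1}$-coefficient $pq(p-p^{-1})$.

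So the braid relation does not hold over $\Q(p,q)$ with $p,q$ independent, which is how $\Hy(G_2)$ is presented in \S\ref{sec:G2}; it holds precisely on the locus $p^2=q^2$, in particular after the specialization $p=q$. Consequently your proposed proof cannot be completed as a proof of the statement in the two-parameter form in which it is asserted: the ``delicate two-parameter interaction'' you flag at the end is not merely delicate, it is an actual obstruction, and the honest conclusion of your computation is that the six formulas define a right $\Hy$-module structure if and only if $p^2=q^2$. (The paper's computer check, like the bar/canonical-basis theory of \S\ref{sec:Quasi}, is evidently tied to a one-parameter specialization such as $p=q$; at that specialization your argument does go through, since then $A^3$ is symmetric.)
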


\begin{proof}
We check the braid relation holds by computer.
\end{proof}

\subsubsection{Subgroup of type $A_1\times A_1$, again}

We consider $W_f =\langle s_1, s_{21212} \rangle =\{e, s_1, s_{21212}, s_{121212}\}$,  ${}^fW =\{e, s_2, s_2 s_1\}.$ 
Then we consider the 3-dimensional vector space 
  \[
  \M_f =\text{span} \{M_f, M_{f\cdot s_2}, M_{f\cdot s_2s_1} \}.
  \]

\begin{proposition}
$\M_f$ is a right $\Hy$-module given by
\begin{align*}
M_f H_1 = pM_{f}, \quad
M_{f\cdot s_2} H_1 &= M_{f\cdot s_2s_1}, \quad
M_{f\cdot s_2s_1} H_1 = (p-p^{-1}) M_{f\cdot s_2s_1} +M_{f\cdot s_2}, 
\\
M_f H_2 = M_{f\cdot s_2}, \quad
M_{f\cdot s_2} H_2 &=  (q-q^{-1}) M_{f\cdot s_2} + M_{f}, \quad
M_{f\cdot s_2s_1} H_2 = q M_{f\cdot s_2s_1}.
\end{align*}
\end{proposition}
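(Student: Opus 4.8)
We must verify that the six formulas
\begin{align*}
M_f H_1 &= pM_{f}, &
M_{f\cdot s_2} H_1 &= M_{f\cdot s_2s_1}, &
M_{f\cdot s_2s_1} H_1 &= (p-p^{-1}) M_{f\cdot s_2s_1} +M_{f\cdot s_2}, \\
M_f H_2 &= M_{f\cdot s_2}, &
M_{f\cdot s_2} H_2 &=  (q-q^{-1}) M_{f\cdot s_2} + M_{f}, &
M_{f\cdot s_2s_1} H_2 &= q M_{f\cdot s_2s_1}
\end{align*}
define a genuine right $\Hy(G_2)$-module on the three-dimensional space $\M_f = \operatorname{span}\{M_f, M_{f\cdot s_2}, M_{f\cdot s_2 s_1}\}$, where $W_f = \langle s_1, s_{21212}\rangle$ is the indicated non-parabolic reflection subgroup of type $A_1\times A_1$, with ${}^fW = \{e, s_2, s_2 s_1\}$.

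\begin{proof}[Proof proposal]
The plan is to follow the template of Proposition~\ref{prop:d3}: the formulas above are exactly the specialization of the general action in Proposition~\ref{prop:HM} (equivalently \eqref{equa:typeB act}) to this particular $W_f$ and its coset representatives, so the only thing that can fail is the defining relations of $\Hy(G_2)$. Concretely, I would check three things: the two quadratic relations $(H_i - q_i)(H_i + q_i^{-1}) = 0$ acting on each basis vector, and the degree-six braid relation $(H_1 H_2)^3 = (H_2 H_1)^3$. The quadratic relations are immediate from the formulas, since on each of $M_f, M_{f\cdot s_2}, M_{f\cdot s_2 s_1}$ the operator $H_i$ acts either as a scalar ($p$ or $q$, resp.\ $q-q^{-1}$-type) or as a length-increasing/decreasing move whose square reproduces the quadratic relation by the same bookkeeping as in Lemma~\ref{lem:HB}; this is routine.

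The substance is the braid relation. First I would record the two matrices of $H_1$ and $H_2$ in the ordered basis $(M_f, M_{f\cdot s_2}, M_{f\cdot s_2 s_1})$, reading them directly off the displayed formulas: $H_1$ acts with $M_f\mapsto pM_f$ and swaps $M_{f\cdot s_2}\leftrightarrow M_{f\cdot s_2 s_1}$ up to the lower-triangular correction, while $H_2$ fixes the $M_{f\cdot s_2 s_1}$ line by $q$ and couples $M_f\leftrightarrow M_{f\cdot s_2}$. Then I would compute the two length-six words $(H_1 H_2)^3$ and $(H_2 H_1)^3$ as $3\times 3$ matrices over $\Q(p,q)$ and compare entrywise. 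Because the module is only three-dimensional and each generator is supported on at most two basis vectors at a time, this matrix computation is short enough to carry out by hand, though it is cleanest to confirm by direct symbolic computation, exactly as the authors do for Proposition~\ref{prop:d3}.

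The main obstacle is precisely this braid-relation verification, since $W_f$ is not parabolic and there is no a priori guarantee (indeed, the introduction notes that such formulas can fail, as in the $F_4$ examples of \S\ref{sec:F4fail}) that the naive formulas are consistent with the order-six relation. The conceptual reason one expects success here is that this $W_f$ is the mirror image of the case in Proposition~\ref{prop:d3} under the diagram symmetry of $G_2$ interchanging $s_1\leftrightarrow s_2$ and $p\leftrightarrow q$; indeed the present $W_f = \langle s_1, s_{21212}\rangle$ and ${}^fW = \{e, s_2, s_2 s_1\}$ are obtained from the previous $W_f = \langle s_2, s_{12121}\rangle$, ${}^fW = \{e, s_1, s_1 s_2\}$ by this symmetry. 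So rather than redo the computation from scratch, the efficient route is to invoke the automorphism of $\Hy(G_2)$ swapping $(H_1, H_2, p, q)\mapsto (H_2, H_1, q, p)$, under which the module structure of Proposition~\ref{prop:d3} is carried precisely to the one asserted here; since $(H_1 H_2)^3 = (H_2 H_1)^3$ is symmetric under this swap, the braid relation for the present module follows from that of Proposition~\ref{prop:d3}. Failing that, a direct computer check as in the previous proposition suffices.
\end{proof}
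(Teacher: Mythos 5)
Your proposal is correct, but its primary argument is genuinely different from the paper's: the paper's entire proof is ``We check the braid relation holds by computer,'' i.e.\ the same brute-force verification as for Proposition~\ref{prop:d3}, whereas you derive the present case from Proposition~\ref{prop:d3} by the diagram symmetry of $G_2$. Your symmetry argument is sound, and it is worth recording why: the assignment $(H_1,H_2,p,q)\mapsto(H_2,H_1,q,p)$ defines a ring automorphism $\phi$ of $\Hy(G_2)$ that is only \emph{semilinear} over $\Q(p,q)$, since it restricts to the field automorphism swapping $p$ and $q$. Concretely, if $B_1,B_2$ are the matrices of $H_1,H_2$ acting in the ordered basis $(M_f,M_{f\cdot s_1},M_{f\cdot s_1s_2})$ of Proposition~\ref{prop:d3}, then applying the field automorphism entrywise to $B_2$ and to $B_1$ yields exactly the matrices of $H_1$ and $H_2$ asserted in the present proposition in the basis $(M_f,M_{f\cdot s_2},M_{f\cdot s_2s_1})$; since entrywise application of a field automorphism is a ring automorphism of the $3\times 3$ matrix algebra, the two quadratic relations and the braid relation $(H_1H_2)^3=(H_2H_1)^3$ all transfer from the already-established module of Proposition~\ref{prop:d3}. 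The group-theoretic data matches as well: $\langle s_1,s_{21212}\rangle$ and ${}^fW=\{e,s_2,s_2s_1\}$ are the images of $\langle s_2,s_{12121}\rangle$ and $\{e,s_1,s_1s_2\}$ under the diagram automorphism of $W(G_2)$. As for what each approach buys: the paper's computer check is self-contained but opaque and must be repeated case by case, while your reduction costs no new computation and explains conceptually why this ``mirror'' case could not fail once the previous one succeeded; the only care required is the semilinearity just noted, i.e.\ the twist must swap $p\leftrightarrow q$ in the matrix entries as well as swapping the two operators, which your formulation does capture.
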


\begin{proof}
We check the braid relation holds by computer.
\end{proof}

\subsection{Type $F_4$}
\label{sec:F4}
Let $W=W(F_4)$ be the Coxeter group of type $F_4$. The simple roots of $W(F_4)$ are denoted by $\alpha_1 =\e_1-\e_2, \alpha_2 =\e_2 -\e_3$ (long) and $\alpha_3 =\e_3, \alpha_4 =\hf(\e_4-\e_1-\e_2-\e_3)$ (short). Recall $|W| =2^7 3^2$. 

 Accordingly, the Hecke algebra $\Hy =\Hy(F_4)$ of type $F_4$ is generated by $H_1, H_2, H_3, H_4$ subject to relations:
  \begin{align*}
    (H_i + p^{-1})(H_i -p)&=0 \; (i=1,2),  \quad 
    (H_j + q^{-1})(H_j -q)=0 \; (j=3, 4),  \\ 
    \text{(braid group relations) \quad} 
    H_1H_2H_1&=H_2H_1H_2, 
    \quad
     H_2H_3H_2H_3 =H_3H_2H_3H_2, \\
     H_3 H_4 H_3 &=H_4 H_3 H_4, \quad
     H_iH_j =H_j H_i \; (|i-j| \ge 2).
  \end{align*}
  
  \subsubsection{Subgroup of type $C_4$}
  \label{sub:C4}


Let $W_f =\langle s_2, s_3, s_4, s_{12321} \rangle$, and  ${}^fW=\{e, s_1, s_1s_2\}$. 
Then we consider the 3-dimensional vector space 
  \[
  \M_f =\text{span} \{M_f, M_{f\cdot s_1}, M_{f\cdot s_1s_2} \}.
  \]

\begin{proposition}
\label{prop:F4C3}
$\M_f$ is a right $\Hy$-module given by
\begin{align*}
M_f H_1 =M_{f\cdot s_1}, \quad
M_{f\cdot s_1} H_1 &= (p -p^{-1}) M_{f\cdot s_1} +M_f, \quad
M_{f\cdot s_1s_2} H_1 = p M_{f\cdot s_1s_2}, 
\\
M_f H_2 =p M_f, \quad
M_{f\cdot s_1} H_2 &=  M_{f\cdot s_1s_2}, \quad
M_{f\cdot s_1s_2} H_2 = (p-p^{-1}) M_{f\cdot s_1s_2}+ M_{f\cdot s_1},
\\
M_f H_3 =q M_f, \quad
M_{f\cdot s_1} H_3 &=  qM_{f\cdot s_1}, \quad
M_{f\cdot s_1s_2} H_3 = q M_{f\cdot s_1s_2},
\\
M_f H_4 =q M_f, \quad
M_{f\cdot s_1} H_4 &=  qM_{f\cdot s_1}, \quad
M_{f\cdot s_1s_2} H_4 = q M_{f\cdot s_1s_2}.
\end{align*}
\end{proposition}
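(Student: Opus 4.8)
The plan is to check directly that the operators on $\M_f$ defined by the six displayed formulas satisfy all of the defining relations of $\Hy(F_4)$ recorded in \S\ref{sec:F4}; this is exactly what is needed for the assignment $H_i\mapsto(\text{the stated operator})$ to extend to a right $\Hy$-action on the $3$-dimensional space $\M_f$ with ordered basis $(M_f,M_{f\cdot s_1},M_{f\cdot s_1s_2})$. Note that the general construction of \S\ref{sec:Quasi} does not apply here: Proposition~\ref{prop:HM} requires an intermediate type-$B$ parabolic $W_d$ with $W_f\subset W_d\stackrel{\text{par.}}{\subset}W$, whereas the present $W_f$ is of type $C_4$ and already has full rank, so no such $W_d$ exists. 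The decisive simplification is that both $H_3$ and $H_4$ act on $\M_f$ as the scalar $q\cdot\mathrm{Id}$.

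First I would dispatch every relation in which $H_3$ or $H_4$ occurs. Because $H_3=H_4=q\cdot\mathrm{Id}$, the quadratic relations $(H_j+q^{-1})(H_j-q)=0$ hold since $H_j-q=0$; the braid relation $H_3H_4H_3=H_4H_3H_4$ becomes $q^3=q^3$; the relation $H_2H_3H_2H_3=H_3H_2H_3H_2$ reduces to $q^2H_2^2=q^2H_2^2$; and each commutation $H_1H_3=H_3H_1$, $H_1H_4=H_4H_1$, $H_2H_4=H_4H_2$ is automatic since a scalar operator is central. Hence none of these relations imposes a constraint.

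Next I would treat the quadratic relations for $H_1$ and $H_2$ by exhibiting their invariant subspaces. The operator $H_1$ stabilizes $\mathrm{span}\{M_f,M_{f\cdot s_1}\}$, acting there by the standard $2\times2$ Hecke matrix $\bigl(\begin{smallmatrix}0&1\\1&p-p^{-1}\end{smallmatrix}\bigr)$, whose eigenvalues are $p$ and $-p^{-1}$, and it acts by the scalar $p$ on $M_{f\cdot s_1s_2}$; therefore $(H_1+p^{-1})(H_1-p)=0$. Symmetrically $H_2$ acts by the scalar $p$ on $M_f$ and by the same Hecke block on $\mathrm{span}\{M_{f\cdot s_1},M_{f\cdot s_1s_2}\}$, giving $(H_2+p^{-1})(H_2-p)=0$.

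The only substantive step, and the main obstacle, is the braid relation $H_1H_2H_1=H_2H_1H_2$: here neither side is scalar, and both $H_1$ and $H_2$ genuinely mix the basis vectors. I would verify it by applying both three-letter words to each of $M_f$, $M_{f\cdot s_1}$, $M_{f\cdot s_1s_2}$ in turn and comparing the resulting coefficients --- a short but non-routine $3\times3$ computation (equivalently, that the two displayed matrices satisfy the $A_2$ braid relation). It is an identity of precisely this kind that can fail for other reflection subgroups, as in \S\ref{sec:F4fail} and \S\ref{F4:B3A1}, so it cannot be bypassed; as with the $G_2$ examples above, it may also simply be confirmed by computer. Once it holds, all relations are satisfied and $\M_f$ is a right $\Hy(F_4)$-module.
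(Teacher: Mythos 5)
Your proof is correct --- the $3\times 3$ braid-relation computation does go through (writing $a=p-p^{-1}$, both $H_1H_2H_1$ and $H_2H_1H_2$ send $M_f\mapsto pM_{f\cdot s_1s_2}$, $M_{f\cdot s_1}\mapsto pM_{f\cdot s_1}+apM_{f\cdot s_1s_2}$, and $M_{f\cdot s_1s_2}\mapsto pM_f+apM_{f\cdot s_1}+ap^2M_{f\cdot s_1s_2}$, using $a p+1=p^2$) --- and its skeleton matches the paper's: both arguments dispatch every relation involving $H_3$ or $H_4$ by the observation that these generators act as the scalar $q\cdot\Id$. The difference lies in how the one substantive relation $H_1H_2H_1=H_2H_1H_2$ is discharged. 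You do it by an explicit matrix computation (or by computer), whereas the paper avoids any computation by recognizing that the stated formulas for $H_1,H_2$ on $(M_f, M_{f\cdot s_1}, M_{f\cdot s_1s_2})$ coincide with the action of the type $A_2$ subalgebra $\langle H_1,H_2\rangle$ on the \emph{parabolic} permutation module attached to $\langle s_2\rangle\subset\langle s_1,s_2\rangle$; since that module is an honest induced module, all of its relations (braid relation included) are already known to hold, so no new verification is needed. The paper's route is slicker and explains structurally why the relation holds; yours is self-contained, additionally records the quadratic relations explicitly, and is exactly the kind of check whose failure is the point of \S\ref{sec:F4fail} and \S\ref{F4:B3A1}. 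Your preliminary remark that Proposition \ref{prop:HM} cannot be invoked here --- because this $W_f$ has full rank in $W(F_4)$, so no intermediate type-B proper parabolic $W_d$ exists --- is accurate and makes explicit why this example requires a separate argument at all.
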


\begin{proof}
Thanks to $H_3 =q\cdot \Id$ and $H_4 =q\cdot \Id$, we only need to check the braid relation between $H_1, H_2$. In this case, it follows by the same checking as for the case of the parabolic subgroup $\langle s_2 \rangle$ in $\langle s_1, s_2 \rangle$. 
\end{proof}

\subsubsection{}
\label{sec:F4fail}
We consider $W_g =\langle s_3, s_4, s_{12321}  \rangle$; this is a parabolic subgroup of the $W_f$ in \S\ref{sub:C4}. Hence we have $W_g \stackrel{\text{par.}}{\subset} W_f \subset W$. 

Note that $|{}^gW| =48$, as $|W_g \backslash W_f| =16$ and $|{}^fW| =3$. We can check ${}^gW={}^gW_f\cdot {}^fW$. Following the general construction we can define a vector space $\M_g$ parameterized by ${}^gW$.

We check by computer that the $\Hy$-actions on $\M_g$ in this case does not define a module structure, i.e. the braid relations fail to hold. 
\subsubsection{Subgroup of type $B_3 A_1$}
\label{F4:B3A1}

Let $W_f =\langle s_1, s_2, s_3, s_{432312343231234} \rangle$, $|W_f| = 2^5 3$, $|{}^fW|=12$, and   
\[
{}^fW =\{e, s_4, s_{43}, s_{432}, s_{4321},  s_{4323},  s_{43231}, s_{43234}, s_{432312}, s_{432341}, s_{4323123}, s_{4323412} \}. 
\]

In this case we also check by computer that the $\Hy$-actions on $\M_f$ does not define a module structure, i.e. the braid relations fail to hold.


\end{document}